\DeclareMathOperator{\N}{\mathbb{N}}
\DeclareMathOperator{\R}{\mathbb{R}}
\DeclareMathOperator{\cK}{\mathfrak{K}}
\DeclareMathOperator{\iii}{\mathtt{i}}
\DeclareMathOperator{\jjj}{\mathtt{j}}
\DeclareMathOperator{\kkk}{\mathtt{k}}
\DeclareMathOperator{\hhh}{\mathtt{h}}
\DeclareMathOperator{\bA}{\mathbf{A}}
\DeclareMathOperator{\bt}{\mathbf{t}}
\DeclareMathOperator{\eps}{\varepsilon}
\DeclareMathOperator{\GL}{GL}
\DeclareMathOperator{\SL}{SL}
\renewcommand{\phi}{\varphi}
\numberwithin{equation}{section}
\theoremstyle{plain}
\newtheorem{theorem}{Theorem}[section]
\newtheorem{condition}[theorem]{Condition}
\newtheorem{corollary}[theorem]{Corollary}
\newtheorem{proposition}[theorem]{Proposition}
\newtheorem{lemma}[theorem]{Lemma}
\newtheorem*{claim}{Claim}
\theoremstyle{remark}
\newtheorem{remark}[theorem]{Remark}
\theoremstyle{definition}
\title{Dynamically defined subsets of generic self-affine sets}
\author{Bal\'azs B\'ar\'any$^{1,}$\thanks{BB acknowledges support from grants OTKA K123782 and OTKA~FK134251.} \and Sascha
  Troscheit$^{2,}$\thanks{ST
was funded by the Austrian Research Fund (FWF) Grant M-2813.\newline \indent Both authors acknowledge support from Aktion \"Osterreich-Ungarn
103öu6.}}
\begin{document}
\maketitle
\begin{center}
  \vspace{-1em}
$^1)$
Budapest University of Technology and Economics, Department of Stochastics, MTA-BME Stochastics Research Group, 1521~Budapest, P.O. Box 91, Hungary.\vskip.5em
$^2)$ Faculty of Mathematics, University of Vienna, Oskar Morgenstern Platz 1, 1090 Wien, Austria.
\end{center}
\vspace{1em}

\begin{abstract}
  In dynamical systems, shrinking target sets and pointwise recurrent sets are two important classes of dynamically
  defined subsets. 
  In this article we introduce a mild condition on the linear parts of the affine mappings that
  allow us to bound the Hausdorff dimension of cylindrical shrinking target and recurrence sets.
  For generic self-affine sets in the sense of Falconer, that is by randomising the translation
  part of the affine maps, we prove that these bounds are sharp.
  These mild assumptions mean that our results significantly extend and complement the existing
  literature for recurrence on self-affine sets.
\end{abstract}

\section{Introduction}
The shrinking target problem in dynamical systems investigates the ``size'' of the set of 
points that recur to a collection of (shrinking) targets infinitely many times. Letting $(X,T,\mu)$ be
a dynamical system with invariant measure $\mu$ and a collection of (measurable) subsets
$(B_k)_{k\in\N}$, $B_k\subseteq X$ one investigates 
\[
  R((B_k)_k) = \{ x\in X : T^k(x)\in B_k \text{ for infinitely many } k\in\N\}.
\]
Similarly, given a function $\psi:\N\to \R^+$, the pointwise recurrent set is defined as
\[
  S(\psi) = \{ x\in X : T^k(x) \in B(x,\psi(k)) \text{ for infinitely many } k\in\N\}.
\]
Often these sets are dense in the original space $X$, as well as $G_\delta$, and so dimension
theory is used to classify the sizes of such sets. The Hausdorff dimension is the most appropriate
choice here, as dense $G_\delta$ sets have full dimension for, e.g. the packing-, Minkowski-, and Assouad-type
dimensions. 

The shrinking target problem was first investigated by Hill and Velani 
for Julia sets who analysed their Hausdorff dimension \cite{Hill1} and found a zero-one law for its
Hausdorff measure \cite{Hill2}.
The shrinking target problem has intricate links to number theory when using naturally arising sets in
Diophantine approximation as the shrinking targets. This has received a lot of attention over recent
years, see for instance \cite{Allen19, Barany17, Koivusalo18, Persson17, Reeve11} for shrinking
target sets and \cite{Baker20, Bugeaud03, Fan13, Fan06,
Kim16, Li14, Liao13, Liao13a} for related research. 

The literature of recurrence sets so far has focussed mostly on zero-one laws for conformal and one dimensional dynamics, such as $\beta$-transformations, see Tan and Wang \cite{Tan11}, and  Zheng and Wu \cite{Zheng20}.
For self-similar and self-conformal dynamics these questions were explored by Seuret and
Wang \cite{Seuret15}, who also gave a pressure formula for the Hausdorff dimension, as well as Baker and Farmer \cite{Baker21} who stated a zero-one law dependent on a convergence condition of the size of the neighbourhoods. Finally, and most recently, Kirsebom, Kude, and Persson \cite{Kirsebom21} studied linear maps on the $d$-dimensional torus.

The above works mostly concern dynamical systems in $\R^1$ or conformal dynamics and transitioning
to higher dimensional non-conformal dynamics
presents severe challenges.
To circumvent the extreme challenges that affinities pose, a common strategy is to ``randomise'' the
affine maps by considering typical translation parameter. 
This approach was first considered by Falconer in his seminal article \cite{Falconer88}, whose
conditions were significantly relaxed by Solomyak \cite{Solomyak98} and generalised by Jordan, Pollicott and Simon \cite{JPS}.

This typicality with respect to the translation parameter allows one to say more about the
regularity of the attractors and is a commonly employed strategy, see for example \cite{JPS}.
Using such randomisation, Koivusalo and Ram\'irez
\cite{Koivusalo18} gave an expression for the Hausdorff dimension of a self-affine
shrinking target problem. They show that for a fixed symbolic target with exponentially shrinking diameter
and well-behaved affine maps, the Hausdorff dimension is typically given by the zero of an appropriate
pressure function. Strong assumptions are made on the affine system, as well as the fixed target
and in this article we significantly improve upon their results.

We will show that for a large family of self-affine systems and dynamical targets with non-fixed
centres the Hausdorff dimension is given by the intersection of two pressures: one being the
standard self-affine pressure function, the other being an inverse lower pressure related to the target.
Crucially, we do not expect the target to be fixed and the inverse pressure to exist.

Our condition also allows us to investigate the dimensions of sets with a pointwise recurrence, a quantitative version of recurrence for self-affine dynamics. As far as we are aware, this is the first time this was attempted for non-conformal dynamics in higher dimensions.

\section{Results}

\subsection{Self-affine sets and symbolic space}
Let $\bA = \{A_1, A_2, \cdots, A_N\}$ be a collection of non-singular $d \times d$ contracting matrices.
Let $\bt = \{t_1, t_2, \cdots, t_N\}$ be a collection of $N$ vectors in $\R^d$.

Let $\{1,\cdots,N\}$ be a finite alphabet and write $\Sigma_n, \Sigma_*,
\Sigma$ for the union of words of length $n$, the union of all finite length words, and all
infinite words, respectively. For words $\iii\in\Sigma_n$ and $\jjj\in\Sigma$ we write $\iii = i_1 i_2 \cdots i_n$ and
$\jjj = j_1 j_2 \cdots$ to denote the individual letters of $\iii$ and $\jjj$. For a word
$\iii\in\Sigma_*$, let $|\iii|$ denote the length of $\iii$. For any two words $\iii,\jjj\in\Sigma$,
let us denote the common prefix by $\iii\wedge\jjj$, that is, $\iii\wedge\jjj:=i_1\cdots
i_{|\iii\wedge\jjj|}$ and $|\iii\wedge\jjj|:=\min\{k\geq1:i_k\neq j_k\}-1$. We adapt the notation that if $|\iii\wedge\jjj|=0$ then
$\iii\wedge\jjj:=\emptyset$. For two $\iii,\jjj\in\Sigma_*$, denote by $\iii\prec\jjj$ if $\jjj$ is a prefix of $\iii$, that is, $|\iii\wedge\jjj|=|\jjj|\leq|\iii|$. Let $\sigma:\Sigma\to\Sigma$ be the left-shift operator on $\Sigma$, i.e. $\sigma(\iii) =
\sigma (i_1 i_2 i_3 \cdots) = i_2 i_3 i_4 \cdots$.
Let $\Phi_{\bt}=\{f_i(x)=A_ix+t_i\}_{i=1}^N$ be an iterated function system formed by affine
maps on $\R^d$. For a finite word $\iii\in\Sigma_*$, let $A_{\iii}=A_{i_1}\cdots A_{i_n}$ and
$f_{\iii}=f_{i_1}\circ\cdots\circ f_{i_n}$. It is a classical result that 
there exists a unique non-empty compact set $\Lambda\subset\R^d$ such that
$$
\Lambda=\bigcup_{i=1}^Nf_i(\Lambda).
$$
To avoid singleton sets we assume that $N\geq 2$ throughout. Let us denote by $\pi=\pi_{\bt}$ the
natural projection from $\Sigma$ to the attractor of $\Phi_{\bt}$, that is,
$$
\pi_{\bt}(\iii)=\lim_{n\to\infty}f_{i_1}\circ\cdots\circ f_{i_n}(0)=\sum_{k=1}^\infty A_{i_1}\cdots A_{i_{k-1}}t_{i_k}.
$$
Clearly, $\pi_{\bt}(\iii)=f_{i_1}(\pi_{\bt}(\sigma\iii))$ and so 
\begin{equation*}
	\pi_{\bt}(\iii)-\pi_{\bt}(\jjj)=A_{\iii\wedge\jjj}\big(\pi_{\bt}(\sigma^{|\iii\wedge\jjj|}\iii)-\pi_{\bt}(\sigma^{|\iii\wedge\jjj|}\jjj)\big).
\end{equation*}

For a $d\times d$ matrix $A\in \GL_d(\R)$ let $\phi^t(A)$ be the usual singular value function defined by
\[
  \phi^t(A) = \begin{cases}
    \alpha_1(A)\alpha_2(A) \dots \alpha_{\lfloor t\rfloor}(A) \alpha_{\lfloor t
    \rfloor+1}(A)^{t- \lfloor t \rfloor}&\text{for }0\leq t <d,\\
    \left(\alpha_1 (A) \dots \alpha_{d-1}(A) \alpha_d(A)\right)^{t/d}& \text{for } t \geq d.
\end{cases}
\]
For any ball $B$, clearly, $A(B)$ is an ellipsoid and, as it was shown in \cite[Proof of Proposition~5.1]{Falconer88}, it can be covered by at most $(4|B|)^{d}\frac{\alpha_1(A)\cdots\alpha_{\lfloor s\rfloor}(A)}{\alpha_{\lceil s\rceil}(A)^{\lfloor s\rfloor}}$-many cubes with side length $\alpha_{\lceil s\rceil}(A)$.

The pressure  of the self-affine system is defined as
\[
P(t) = \lim_{n\to\infty} \frac{1}{n} \log\sum_{\iii\in\Sigma_n}
\phi^t(A_{\iii}),
\] 
where we note that this limit exists because of the subadditivity of $\phi^t(A)$. Further,
the pressure is continuous in $t$, strictly decreasing, and satisfies $P(0)=\log N$ and $P(t)\to-\infty$ as
$t\to\infty$. 

Throughout the paper we will use the following extra condition:
\begin{condition}\label{thm:injection}
	Assume that $\bA$ is such that for every $s>0$ there exists $C>0$ and $K\in\N$ such that for every
	$\iii,\jjj\in\Sigma_*$ there exists $\kkk\in\Sigma_K$ with
	\[
	\phi^s(A_{\iii\kkk\jjj}) \geq
	C\phi^s(A_{\iii})\phi^s(A_{\jjj}).
	\]
\end{condition}

Similar conditions has been introduced earlier by Feng \cite{Feng09} and K\"aenm\"aki and Morris
\cite{Kaenmaki18}. Feng \cite[Proposition~2.8]{Feng09} showed that under a mild irreducibility
condition there exists $C>0$ and $K>0$ such that for every $\iii,\jjj\in\Sigma_*$ there exists
$\kkk$ with $|\kkk|\leq K$ such that $\|A_{\iii\kkk\jjj}\|\geq C\|A_{\iii}\|A_{\jjj}\|$. Later, this
inequality was generalised by K\"aenm\"aki and Morris \cite[Lemma~3.5]{Kaenmaki18} for the singular
value function under more restrictive but natural irreducibility conditions. Unfortunately, the
uncertainty of the length of the "buffer" word $\kkk$ in the previous conditions does not allow us
to study shrinking target and recurrence sets effectively. We will show in Section~\ref{sec:irred} and Section
\ref{sec:condition} that under some irreducibility and proximality assumptions,
Condition~\ref{thm:injection} holds.

\subsection{Shrinking targets}

Let $(\lambda_k)_{k\in\N} \in(\Sigma_*)^{\N}$ be a sequence of target cylinders.
We are interested in the shrinking target set 
\[
  R_{\bt}( (\lambda_k)_{k\in\N}) = \pi_{\bt} \left\{ \iii\in\Sigma : \sigma^k \iii\in [\lambda_k] \text{ for
  infinitely many }k\in\N\right\}.
\]

For our sequence of target cylinders, we define the following inverse lower pressure:
\begin{equation}\label{eq:inverselower}
  \alpha(t) = \liminf_{k\to\infty}\frac{-1}{k}\log \phi^t(A_{\lambda_k})
\end{equation}
Let 
\begin{equation}\label{eq:defs0}
	s_0:=\inf\{t>0:P(t)\leq\alpha(t)\}.
\end{equation}
If $\liminf_{n\to\infty}\frac{|\lambda_k|}{k}<\infty$ then there exists a unique solution $s_0$ to
the equation $P(s_0)=\alpha(s_0) \geq 0$, see Lemma \ref{thm:uniqueintersection}. 
Otherwise $s_0=0$. We prove that this value gives the Hausdorff dimension of the shrinking target
set under some assumptions on the matrices $\bA$.

\begin{theorem}
  \label{thm:mainTheorem}
  Let $\bA$ be a collection of $d\times d$ matrices. Suppose that $\bA$ satisfies Condition~\ref{thm:injection} and $\|A\|<1/2$ for all $A\in\bA$. Then
  $$
  \dim_HR_{\bt}( (\lambda_k)_k)=\min\{d,s_0\}\text{  for Lebesgue-almost
  	every $\bt$. }
  $$
  Moreover, $\mathcal{L}_d(R_{\bt}( (\lambda_k)_k))>0$ for Lebesgue-almost every $\bt$ if $s_0>d$.
\end{theorem}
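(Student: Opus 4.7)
Since $R_{\bt}((\lambda_k)_k)$ is a limsup set, for every $K$ one has $R_{\bt}\subseteq\bigcup_{k\geq K}\bigcup_{\iii\in\Sigma_k} f_{\iii\lambda_k}(\Lambda)$. By the Falconer cube covering recalled just before Condition~\ref{thm:injection}, each $f_{\iii\lambda_k}(\Lambda)$ admits a cover of total $s$-content $\lesssim \phi^s(A_{\iii\lambda_k})$ for $s\in[0,d]$, and submultiplicativity of $\phi^s$ gives $\phi^s(A_{\iii\lambda_k})\leq C\phi^s(A_{\iii})\phi^s(A_{\lambda_k})$. Since $\alpha(s)$ is a liminf, for each $\eps>0$ one has $\phi^s(A_{\lambda_k})\leq e^{-k(\alpha(s)-\eps)}$ for all sufficiently large $k$, so summing over $\iii\in\Sigma_k$ and $k\geq K$ gives a cover of $s$-content at most $\sum_{k\geq K}e^{k(P(s)-\alpha(s)+2\eps)}$. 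For $s>s_0$ and $\eps$ small this tail vanishes as $K\to\infty$, yielding $\mathcal{H}^s(R_{\bt})=0$ and hence $\dim_H R_{\bt}\leq\min\{d,s_0\}$ for every $\bt$.

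\textbf{Lower bound on dimension.} Fix $s<\min\{d,s_0\}$, so that $P(s)>\alpha(s)$. I would construct a probability measure $\nu$ on $\Sigma$ supported inside the symbolic shrinking target set and push it forward by $\pi_{\bt}$. Choose a sparse subsequence $k_j\uparrow\infty$ along which $\alpha(s)$ is approximated, with gaps $k_{j+1}-k_j\gg |\lambda_{k_j}|+K$. On the complement of the insertion intervals, sample according to the equilibrium state for $P(s)$ (Gibbs-type weights proportional to $\phi^s(A_\iota)e^{-|\iota|P(s)}$); at each $k_j$ force the target $\lambda_{k_j}$; and glue each equilibrium segment to the adjacent target word via a fixed-length buffer $\kkk_j\in\Sigma_K$ supplied by Condition~\ref{thm:injection}. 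The uniformity in that condition yields a multiplicative factorisation of $\phi^s$ along the construction, so the symbolic energy $\int\!\int\phi^s(A_{\iii\wedge\jjj})^{-1}\,d\nu(\iii)\,d\nu(\jjj)$ telescopes into a convergent sum driven by the positive pressure surplus $P(s)-\alpha(s)$. The transition to the geometric setting uses the Falconer--Solomyak--Jordan--Pollicott--Simon transversality estimate, available under $\|A_i\|<1/2$, in the form $\int_T|\pi_{\bt}(\iii)-\pi_{\bt}(\jjj)|^{-s}\,d\bt\leq C\phi^s(A_{\iii\wedge\jjj})^{-1}$ on a compact parameter set $T$; Fubini then makes the $s$-energy of $\pi_{\bt*}\nu$ finite for Lebesgue-a.e.\ $\bt$, and the Frostman criterion gives $\dim_H R_{\bt}\geq s$. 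Letting $s\uparrow\min\{d,s_0\}$ closes the dimension statement.

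\textbf{Positive Lebesgue measure and main obstacle.} When $s_0>d$ one has $P(d)>\alpha(d)$, and running the same construction at $s=d$, combined with the $L^2$-version of the transversality estimate, shows that $\pi_{\bt*}\nu$ is absolutely continuous with square-integrable density for a.e.\ $\bt$, forcing $\mathcal{L}_d(R_{\bt})>0$. The main technical obstacle is the construction of $\nu$ and the symbolic-energy estimate: Condition~\ref{thm:injection} is essential precisely because it supplies buffer words of \emph{fixed} length $K$ independent of the surrounding cylinders, yielding the uniform lower bound $\phi^s(A_{\iii\kkk\jjj})\geq C\phi^s(A_{\iii})\phi^s(A_{\jjj})$; the weaker buffer statements of Feng or K\"aenm\"aki--Morris give only $K$ depending on the words, which would destroy the quantitative telescoping needed to bound the energy.
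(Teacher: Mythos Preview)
Your overall strategy matches the paper's: the upper bound via the natural cover is identical, and the lower bound proceeds by constructing a measure on the symbolic limsup set, bounding a symbolic energy, and invoking the Falconer--Solomyak--Jordan--Pollicott--Simon transversality estimate. However, your description of the measure hides a real gap. You propose to ``sample according to the equilibrium state for $P(s)$ (Gibbs-type weights proportional to $\phi^s(A_\iota)e^{-|\iota|P(s)}$)'' on the free segments between targets, but for matrix cocycles the equilibrium state for $\phi^s$ does \emph{not} in general satisfy a Gibbs inequality $\mu([\iii])\asymp\phi^s(A_\iii)e^{-|\iii|P(s)}$, and without it the energy does not telescope as you claim. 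The paper sidesteps this entirely by never invoking the equilibrium state: it fixes a large block length $p$, breaks \emph{every} free segment (not just the vicinity of the targets) into length-$p$ blocks separated by length-$K$ buffers from Condition~\ref{thm:injection}, and sets the weight to be the product $\phi^s(\iii_1)\cdots\phi^s(\iii_{n+1})$ over those blocks. Condition~\ref{thm:injection} then yields the comparison $\phi^s(\iii_1\kkk_1\cdots\kkk_n\iii_{n+1})\geq C^n\phi^s(\iii_1)\cdots\phi^s(\iii_{n+1})$ directly, at the price of an accumulated buffer cost $C^{-n}$.

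That factor must then be absorbed, and here the paper does something you omit: it bounds the $t$-energy of $\nu_p^s$ for $t<s$ rather than the $s$-energy, using the ratio estimate $\phi^s(A_\iii)/\phi^t(A_\iii)\leq\gamma^{(s-t)|\iii|}$ (Lemma~\ref{thm:singularratio}) to gain an extra factor $\gamma^{(s-t)\ell_n}$, and then chooses $p$ so large that $C^{-1}\gamma^{(s-t)p}<1$. This is the device that makes the energy sum geometric irrespective of how delicately the pressure surplus $P(s)-\alpha(s)$ interacts with the target lengths. The positive-measure part is handled the same way, with $\nu_p^s$ for some $d<s<s_0$ and the finiteness of the $d$-energy (Lemma~\ref{thm:finiteEnergy} with $t=d$), rather than by running the construction at $s=d$ as you suggest.
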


\begin{remark}
  The upper dimension bounds do not just hold for almost every translation $\bt$, but hold for all
  translations.
\end{remark}

Similar result has been obtained by Koivusalo and Ram\'irez \cite{Koivusalo18} for shrinking targets
on self-affine sets. Firstly, they assume that there exists a constant $C>0$ such that for every
$\iii,\jjj\in\Sigma_*$, $\varphi^s(A_{\iii}A_{\jjj})\geq C\varphi^s(A_{\iii})\varphi^s(A_{\jjj})$,
secondly, they assume that $\alpha(t)$ is taken as a limit. The first condition holds only for a
restrictive family of matrices, see Remark~\ref{rem:compare}. By using a more detailed analysis on the pressure function, we were able to relax the condition on the limit as well.

\subsection{Recurrence sets}

Now, we turn our attention to the recurrence sets. Let $\psi\colon\N\mapsto\N$, and let $\beta=\liminf_{n\to\infty}\frac{\psi(n)}{n}$. Consider the set
$$
S_{\bt}(\psi):=\pi_{\bt}\left\{\iii\in\Sigma:\sigma^k\iii\in[\iii|_{\psi(k)}]\text{ for infinitely many $k\in\N$}\right\}.
$$
Let us define the square-pressure function
$$
P_2(t)=\lim_{n\to\infty}\frac{-1}{n}\log\sum_{\iii\in\Sigma_n}\left(\varphi^t(A_{\iii})\right)^2.
$$
Note that the limit exists again because of the subadditivity of $\phi^t(A)$. Further, the pressure is continuous in $t$, strictly increasing, and satisfies $P_2(0)=-\log N$ and $P_2(t)\to\infty$ as
$t\to\infty$.

\begin{theorem}
	\label{thm:mainTheorem2}
	Let $\bA$ be a collection of $d\times d$ matrices. Suppose that $\bA$ satisfies
	Condition~\ref{thm:injection} and $\|A\|<1/2$ for all $A\in\bA$. Let $\psi:\N\to\N$ with
	$\beta:=\liminf\psi(n)/n<1$ then
	$$
	\dim_HS_{\bt}(\psi)=\min\{d,r_0\}\text{  for Lebesgue-almost
		every $\bt$, }
	$$
	where $r_0$ is the unique solution of the equation
	\begin{equation}\label{eq:rootsquare}
	(1-\beta)P(r_0)=\beta P_2(r_0).
	\end{equation}
	Moreover, $\mathcal{L}_d(S_{\bt}(\psi)>0$ for Lebesgue-almost every $\bt$ if $r_0>d$.
\end{theorem}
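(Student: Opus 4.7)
The plan is to adapt the shrinking-target strategy of Theorem~\ref{thm:mainTheorem} to the recurrence setting. The key new feature is that the target $\iii|_{\psi(k)}$ is self-referential, so each recurrence event at step $k$ forces the prefix of length $\psi(k)$ to reappear inside $\iii$; this contributes a quadratic singular-value factor $\phi^t(A_{\iii_1})^2$ at that level, which explains why the square pressure $P_2$ appears in place of the inverse pressure $\alpha$ and yields the equation \eqref{eq:rootsquare}.

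\textbf{Upper bound (holds for every $\bt$).} Since $\beta=\liminf_k\psi(k)/k<1$, pick a subsequence $(k_n)$ with $\psi(k_n)/k_n\to\beta$ and $\psi(k_n)<k_n$. A sequence $\iii$ whose iterate at step $k_n$ lies in $[\iii|_{\psi(k_n)}]$ has length-$(k_n+\psi(k_n))$ prefix of the form $\iii_1\jjj\iii_1$ with $|\iii_1|=\psi(k_n)$ and $|\jjj|=k_n-\psi(k_n)$. Submultiplicativity of the singular value function gives $\phi^t(A_{\iii_1\jjj\iii_1})\leq\phi^t(A_{\iii_1})^2\phi^t(A_\jjj)$, so by the ellipsoid covering recalled in the preamble the Hausdorff $t$-mass attributable to step $k_n$ is bounded by
\[
C\Bigl(\sum_{\iii_1\in\Sigma_{\psi(k_n)}}\phi^t(A_{\iii_1})^2\Bigr)\Bigl(\sum_{\jjj\in\Sigma_{k_n-\psi(k_n)}}\phi^t(A_\jjj)\Bigr)=\exp\bigl(-\psi(k_n)P_2(t)+(k_n-\psi(k_n))P(t)+o(k_n)\bigr).
\]
For $t>r_0$, monotonicity of $P,P_2$ combined with \eqref{eq:rootsquare} makes the per-stage exponent strictly negative; summability over $n$ followed by Borel--Cantelli yields $\dim_HS_\bt(\psi)\leq r_0$.

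\textbf{Lower bound for a.e.\ $\bt$.} I would construct a Gibbs-type measure $\mu$ supported on sequences that realise the recurrence at every $k_n$ along a sparse subsequence. At each stage $m$, an allowed word is the concatenation of a free initial block $\iii_1^{(m)}$ of length $\psi(k_m)$, a buffer $\kkk_1^{(m)}$, a free middle block $\jjj^{(m)}$ of length $k_m-\psi(k_m)-2K$, a second buffer $\kkk_2^{(m)}$, the forced copy $\iii_1^{(m)}$, and an inter-stage buffer; the uniform buffer length $K$ is supplied by Condition~\ref{thm:injection}, and two applications of that condition give
\[
\phi^s\bigl(A_{\iii_1^{(m)}\kkk_1^{(m)}\jjj^{(m)}\kkk_2^{(m)}\iii_1^{(m)}}\bigr)\geq C\,\phi^s(A_{\iii_1^{(m)}})^2\phi^s(A_{\jjj^{(m)}}).
\]
Weighting allowed length-$L_n$ words proportionally to $\phi^s(A_\hhh)$, the partition function satisfies $Z_n\gtrsim\exp\bigl(\sum_{m\leq n}[-\psi(k_m)P_2(s)+(k_m-\psi(k_m))P(s)]+o(L_n)\bigr)$. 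The standard Falconer transversality bound $\int|\pi_\bt\iii-\pi_\bt\jjj|^{-s}\,d\bt\leq C/\phi^s(A_{\iii\wedge\jjj})$ reduces $\int I_s((\pi_\bt)_*\mu)\,d\bt$ to $\sum_n 1/Z_n$, which converges for $s<r_0$ by the pressure estimate. Potential-theoretic dimension then yields $\dim_H(\pi_\bt)_*\mu\geq s$ almost surely, and letting $s\nearrow\min\{d,r_0\}$ completes the bound. When $r_0>d$ one instead feeds the same measure into an $L^2$/Fourier estimate $\int\|\widehat{(\pi_\bt)_*\mu}\|_{L^2}^2\,d\bt<\infty$, which reduces to the analogous sum at $s=d$ with $\phi^d(A_\hhh)=|\det A_\hhh|$ and gives $(\pi_\bt)_*\mu\ll\mathcal{L}_d$ with square-integrable density for a.e.\ $\bt$.

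\textbf{Main obstacle.} The self-referential target $\iii|_{\psi(k)}$ couples the measure on free symbols with the matrix-product structure in a way that is absent for the shrinking-target problem. Condition~\ref{thm:injection} is indispensable: its two-fold application inserts bounded-length buffer words around $\jjj^{(m)}$ at each stage so that the combined singular-value estimate factorises cleanly into $\phi^s(A_{\iii_1^{(m)}})^2\phi^s(A_{\jjj^{(m)}})$, making the pressures $P_2$ and $P$ arise with exactly the coefficients $\psi(k_m)$ and $k_m-\psi(k_m)$ dictated by \eqref{eq:rootsquare}. A secondary, routine Borel--Cantelli step along $(k_n)$ is then needed to confirm that $\mu$-almost every sequence lies in the recurrence set.
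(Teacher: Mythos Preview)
Your upper bound has a genuine gap. The recurrence set $S_{\bt}(\psi)$ is a $\limsup$ over \emph{all} $k$, so for any $k_0$ a valid cover must include every $k\geq k_0$; restricting to a subsequence $(k_n)$ along which $\psi(k_n)/k_n\to\beta$ only covers the strictly smaller set $\limsup_n E_{k_n}$, and Borel--Cantelli along that subsequence says nothing about $\dim_H S_{\bt}(\psi)$. The paper instead uses that $\beta=\liminf_k\psi(k)/k$ gives $\psi(k)\geq(\beta-\delta)k-C$ for \emph{every} $k$, and sums over all $k\geq k_0$. Since for generic $k$ one may have $\psi(k)\geq k$, the determined prefix is the periodic word $\overline{\iii}|_{k+\psi(k)}$ and must be decomposed via $\psi(k)\bmod k$, not simply as $\iii_1\jjj\iii_1$.

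Your lower-bound construction also has a structural flaw. The condition $\sigma^{k}\iii\in[\iii|_{\psi(k)}]$ forces the block at positions $k+1,\dots,k+\psi(k)$ to equal the \emph{global} prefix $\iii|_{\psi(k)}$ of the whole sequence, not a fresh free block introduced at stage $m$. With your scheme the repeated piece $\iii_1^{(m)}$ is local to stage $m$, so the resulting sequences do not belong to the symbolic recurrence set at all. The paper's Cantor set $\cK_n'$ handles this by setting $\Omega(\iii,k)=\{\iii|_{\psi'(\ell)}\}$ at the recurrence times, so the forced block is always the already-constructed prefix. This self-reference is precisely what makes the energy estimate (Lemma~\ref{thm:finiteEnergy2}) delicate: the repeated block at level $m_{\eta(n)}$ is itself an element of some earlier $\cK_{k_n}'$, so its contribution produces a factor $\sum_{\jjj\in\cK_{k_n}'}\phi^s(\jjj)^2$ that has to be unwound recursively rather than read off as an independent stage factor. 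Your reduction of the integrated energy to $\sum_n 1/Z_n$ presupposes exactly the stage independence that the recurrence problem destroys.
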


The equation \eqref{eq:rootsquare} applies specifically only to the case when $\beta\leq1$, for other values of $\beta$ it needs to be modified accordingly. The condition $\beta<1$ is purely technical and relies on the fact that the buffer word in Condition~\ref{thm:injection} depends on both of the words before and after it. Hence, for recurrence rates greater than $1$ it might cause ``self-dependence'' in the buffer word, which then may not exist. We note that under the stronger assumption on the matrices by Koivusalo and Ram\'irez \cite{Koivusalo18}, Theorem~\ref{thm:mainTheorem2} can be generalized for any value $\beta\in[0,\infty]$ with a straightforward modification of \eqref{eq:rootsquare} and the proof of Theorem~\ref{thm:mainTheorem2}.

\subsection{Irreducibility of matrices}\label{sec:irred}
Let us denote by $\wedge^k\R^d$ the \textit{$k$-th exterior product of $\R^d$}. For $A\in
\GL_d(\R)$, we can define an invertible linear map $A^{\wedge k}\colon\wedge^k\R^d\mapsto\wedge^k\R^d$ by setting
$$
A^{\wedge k}(u_1\wedge\cdots\wedge u_k)=(A u_1)\wedge\cdots\wedge(Au_k).
$$
Let us consider the following tensor product of the exterior algebras
$$
\widehat{W}=\wedge^1\R^d\otimes\cdots\otimes\wedge^{d-1}\R^d.
$$
Again, for $A\in \GL_d(\R)$, we can define an invertible linear map
$\widehat{A}\colon\widehat{W}\mapsto\widehat{W}$ by setting for $u=u_1\otimes\cdots\otimes u_{d-1}$,
$$
\widehat{A}(u_1\otimes\cdots\otimes u_{d-1})=(A^{\wedge1}u_1)\otimes\cdots\otimes(A^{\wedge(d-1)}u_{d-1}).
$$
We define a linear subspace $W$ of $\widehat{W}$, which is generated by the flags of $\R^d$ as follows:
$$
W=\mathrm{span}\{u_1\otimes(u_1\wedge u_2)\otimes\cdots\otimes(u_1\wedge\cdots\wedge u_{d-1}):\{u_1,\ldots,u_{d-1}\}\text{ linearly independent in }\R^d\}.
$$
We call $W$ the \textit{flag vector space}. Note that the flag space $W$ is
invariant with respect to the linear map $\widehat{A}$ for $A\in \GL_d(\R)$.

We say that $A\in \GL_d(\R)$ is \textit{fully proximal} if it has $d$ distinct eigenvalues in
absolute value. Note that $A$ is fully proximal if and only if $A^{\wedge k}$ is $1$-proximal for
every $k$ if and only if $\widehat{A}$ is $1$-proximal on $W$. We say that the tuple $\bA$ is fully
proximal if there exists a finite product $A_{i_1}\cdots A_{i_k}$ formed by the elements in $\bA$,
which is fully proximal.

We say that the tuple $\bA$ is \textit{fully strongly irreducible} or \textit{strongly irreducible
over $W$} if there are no finite collections $V_1,\ldots,V_n$ of proper subspaces of $W$ such that
$$
\bigcup_{A\in\bA}\bigcup_{k=1}^n\widehat{A}V_k=\bigcup_{k=1}^nV_k.
$$

\begin{proposition}\label{prop:multicond}
	Let $\bA$ be a tuple of matrices in $\GL_d(\R)$ such that $\bA$ is fully proximal and fully
	strongly irreducible. Then for every $0<s<d$ there exists $C>0$ and $K\in\N$ such that for every
	$\iii,\jjj\in\Sigma_*$ there exists $\kkk\in\Sigma_K$ with
	\begin{equation*}
		\phi^s(A_{\iii\kkk\jjj}) \geq	C\phi^s(A_{\iii})\phi^s(A_{\jjj}).
	\end{equation*}
\end{proposition}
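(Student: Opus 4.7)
The proposition strengthens the qualitative quasi-multiplicativity of K\"aenm\"aki--Morris \cite[Lemma 3.5]{Kaenmaki18} (which gives a buffer of length at most $K$) to the quantitative form where the buffer has length exactly $K$. My plan has two stages.

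First I would set up the framework by lifting to the flag vector space $W$ introduced above. Under full proximality and full strong irreducibility, the induced action $\{\widehat{A_i}\}$ on $W$ is proximal and strongly irreducible, placing the problem in the Furstenberg--Guivarc'h setting on the compact projective space of $W$. Adapting the argument of \cite[Lemma 3.5]{Kaenmaki18} to this action (which simultaneously captures all exterior powers $\wedge^k \R^d$ needed to control $\phi^s$ for non-integer $s$), I obtain constants $C_0 > 0$ and $K_0 \in \N$ such that for every $\iii, \jjj \in \Sigma_*$ there is some $\kkk_0 \in \Sigma_*$ with $|\kkk_0| \leq K_0$ and $\phi^s(A_{\iii\kkk_0\jjj}) \geq C_0 \phi^s(A_\iii) \phi^s(A_\jjj)$. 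The geometric content is that $\kkk_0$ realigns the dominant flag of $A_\jjj$ on $W$ so that it is not near-orthogonal to the codominant subspace of $A_\iii$.

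Next I would upgrade from length-bounded to length-exact. The key auxiliary estimate is the singular value inequality $\alpha_i(AB) \geq \alpha_d(A)\alpha_i(B)$, which yields the distortion bound $\phi^s(A_{\mathbf{w}}B) \geq m^{s|\mathbf{w}|}\phi^s(B)$ and its right analogue, where $m := \min_i \alpha_d(A_i) > 0$. These let me absorb short padding into a uniform multiplicative constant. Fix $K$ large relative to $K_0$ and a single letter $p$. For each pair $(\ell_1,\ell_2)$ of non-negative integers, apply the preceding step to $(\iii p^{\ell_1},\, p^{\ell_2}\jjj)$ to obtain a K--M buffer $\widetilde\kkk$ of length $\widetilde k \leq K_0$; then the composite buffer $p^{\ell_1}\widetilde\kkk p^{\ell_2}$ between $\iii$ and $\jjj$ has length $\ell_1+\widetilde k+\ell_2$ and satisfies
\[
\phi^s(A_{\iii p^{\ell_1}\widetilde\kkk p^{\ell_2}\jjj}) \geq C_0\, m^{s(\ell_1+\ell_2)}\, \phi^s(A_\iii)\phi^s(A_\jjj).
\]
Ranging $(\ell_1,\ell_2)$ over a suitable grid, I aim to hit the exact length $K$ via a discrete intermediate-value style argument, producing $\kkk\in\Sigma_K$ with the conclusion holding for $C := C_0\, m^{sK}$.

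The main obstacle lies in this second stage: because $\widetilde k$ can jump by up to $K_0$ as $(\ell_1,\ell_2)$ changes, a naive monotonicity argument on the single parameter $\ell_1$ may ``skip over'' the target length $K$. Resolving this cleanly will likely require one of (i) a two-parameter combinatorial argument exploiting simultaneous variation of the left and right padding, (ii) a compactness/covering argument on the projective space of $W$ producing a finite family of fixed-length universal buffers, or (iii) replacing single-letter padding by powers of a fully proximal word $\mathbf{h}$, which exists by the full proximality assumption, whose action on $W$ is projectively contractive, so that padding preserves the alignment achieved by $\widetilde\kkk$ while allowing fine-grained length control. Any of these routes preserves the uniformity in $(\iii,\jjj)$ needed for the conclusion.
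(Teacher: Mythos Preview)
Your first stage is fine as a citation of \cite[Lemma~3.5]{Kaenmaki18}, but the entire content of the proposition lies in your second stage, and there you have not closed the gap. The obstacle you name is real and not a technicality: inserting padding $p^{\ell}$ between $\iii$ and $\widetilde\kkk$ (or between $\widetilde\kkk$ and $\jjj$) can \emph{destroy} the very alignment that $\widetilde\kkk$ achieved, because the singular-value bound $\phi^s(BAC)\ge c\,\phi^s(BC)$ simply fails in general. Your distortion inequality $\phi^s(A_{\mathbf w}B)\ge m^{s|\mathbf w|}\phi^s(B)$ only controls the \emph{outer} factors $\phi^s(A_{\iii p^{\ell_1}})$ and $\phi^s(A_{p^{\ell_2}\jjj})$; it says nothing about what happens to $\phi^s(A_{\iii\kkk\jjj})$ when $\kkk$ changes. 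And the discrete intermediate-value argument genuinely fails: as $\ell_1$ increases by $1$, the buffer length $\ell_1+\widetilde k(\ell_1)$ can jump by up to $K_0+1$ in either direction, so no target $K$ need be hit. None of your options (i)--(iii) resolves this as stated; (iii) is closest in spirit, but powers of a proximal $\mathbf h$ only give lengths in $|\mathbf h|\cdot\N$, so you still cannot hit an arbitrary prescribed $K$.

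The paper does \emph{not} try to upgrade bounded-length to exact-length. It argues directly by contradiction: if for every $K$ there exist bad pairs $(\iii_{C,K},\jjj_{C,K})$, one normalises by the singular value decompositions, passes to limits as $C\to0$ to get unit flags $u^{\wedge j}_K,v^{\wedge j}_K$, and then takes a further subsequence along $K=K_n!$. The factorial is the key device you are missing: it guarantees that for \emph{any} fixed word length $m$, eventually $m\mid K_n!$, so one can fill out the buffer of length $K_n!$ by an arbitrary test word $\jjj\in\Sigma_{km}$ followed by a high power $A_{\iii}^{q_n}$ of a fully proximal element (whose existence with the right non-degeneracy is the Claim in the proof, established via Lemma~\ref{lem:golgui} and Lemma~\ref{lem:strongirr}). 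Letting $n\to\infty$ forces $\langle v_*^{\wedge k},A_\jjj^{\wedge k}w_k\rangle_k\langle v_*^{\wedge k+1},A_\jjj^{\wedge k+1}w_{k+1}\rangle_{k+1}=0$ for all $\jjj\in\mathcal S(\bA^m)$, which contradicts strong irreducibility via Lemma~\ref{lem:kaenmor}. In short: rather than padding a good buffer up to a fixed length, the paper fixes the (factorial) length first and shows that no buffer of that length can be uniformly bad.
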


\begin{remark}\label{rem:compare}
	Koivusalo and Ram\'irez \cite{Koivusalo18} assumed that there exists a constant
	$D>0$ such that for every $\iii,\jjj\in\Sigma_*$
	$$
	\varphi^s(A_{\iii} A_{\jjj})\geq D\varphi^s(A_{\iii})\varphi^s(A_{\jjj}).
	$$
	B\'ar\'any, K\"aenm\"aki and Morris \cite[Corollary~2.5]{BKM} showed that this condition for planar
	matrix tuples $\bA$ is equivalent with the following: $\bA$ can be decomposed into two sets $\bA_e$ and
	$\bA_h$ such that $\bA_e$ is strongly conformal (i.e. can be transformed into orthonormal matrices
	with a common base transformation) and if $\bA_h\neq\emptyset$, then $\bA_h$ has a strongly
	invariant multicone $\mathcal{C}$ (i.e.
	$\bigcup_{A\in\bA_h}A\overline{\mathcal{C}}\subset\mathcal{C}^o$) such that $A\mathcal{C} =
	\mathcal{C}$ for all $A\in\bA_e$. 
	
	Assuming fully strong irreducibilty and fully proximality is clearly a less restrictive requirement. For instance, in case of planar matrices fully strong irreducibility and fully
	proximality is equivalent with strong irreducibility and proximality.
\end{remark}

Using Proposition \ref{prop:multicond} we obtain the following immediate corollaries.
\begin{corollary}
	\label{cor:mainTheorem1b}
	Let $\bA$ be a collection of $d\times d$ matrices. Suppose that $\bA$ is fully strongly
	irreducible and fully proximal and $\|A\|<1/2$ for all $A\in\bA$. Then
	$$
	\dim_HR_{\bt}( (\lambda_k)_k)=\min\{d,s_0\}\text{  for Lebesgue-almost
		every $\bt$, }
	$$
	where $s_0$ is defined in \eqref{eq:defs0}.	Moreover, $\mathcal{L}_d(R_{\bt}( (\lambda_k)_k))>0$ for Lebesgue-almost every $\bt$ if $s_0>d$.
\end{corollary}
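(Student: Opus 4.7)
The plan is to obtain this corollary as an essentially immediate consequence of Theorem~\ref{thm:mainTheorem}; the only work is to upgrade the proximality and irreducibility hypotheses to Condition~\ref{thm:injection}. The contractivity assumption $\|A\|<1/2$ is shared verbatim between the two statements, so no effort is required there, and the definition of $s_0$ in \eqref{eq:defs0} is identical, so there is nothing to reconcile on the symbolic side either.

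To verify Condition~\ref{thm:injection} I would split the range of the parameter at $s=d$. For $0<s<d$, Proposition~\ref{prop:multicond} applies directly: for each such $s$ it supplies constants $C=C(s)>0$ and $K=K(s)\in\N$ together with, for every pair $\iii,\jjj\in\Sigma_*$, a buffer word $\kkk\in\Sigma_K$ realizing
\[
\phi^s(A_{\iii\kkk\jjj}) \;\geq\; C\,\phi^s(A_{\iii})\,\phi^s(A_{\jjj}).
\]
For $s\geq d$, the singular value function collapses to $\phi^s(A)=|\det A|^{s/d}$, which is strictly multiplicative on matrix products; the inequality then holds trivially with $C=1$ and no buffer at all (or, if one prefers to keep the buffer length nonzero, with $K=1$ and any letter, absorbing the uniform lower bound $\min_i |\det A_i|^{s/d}>0$ into $C$). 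Taken together, Condition~\ref{thm:injection} is satisfied for every $s>0$.

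Once Condition~\ref{thm:injection} is in hand, Theorem~\ref{thm:mainTheorem} applies verbatim and delivers both the dimension formula $\dim_H R_{\bt}((\lambda_k)_k)=\min\{d,s_0\}$ for Lebesgue-almost every $\bt$ and the positive Lebesgue-measure conclusion when $s_0>d$, completing the corollary. The genuine technical weight of the result sits in Proposition~\ref{prop:multicond} rather than in this corollary; at the level of this corollary the only mild subtlety is the dichotomy at $s=d$ noted above, and it is dispatched by the trivial multiplicativity of $\phi^s$ beyond the ambient dimension.
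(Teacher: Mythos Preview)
Your proposal is correct and matches the paper's approach: the paper states the corollary as an immediate consequence of Proposition~\ref{prop:multicond} combined with Theorem~\ref{thm:mainTheorem}, without further argument. Your treatment of the range $s\geq d$ via multiplicativity of $\phi^s(A)=|\det A|^{s/d}$ is a valid way to verify Condition~\ref{thm:injection} as literally stated, though in fact the proof of Theorem~\ref{thm:mainTheorem} only ever invokes the condition for parameters $s<\min\{s_0,d\}$, so the paper's omission of this case is harmless.
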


\begin{corollary}
	\label{cor:mainTheorem2b}
	Let $\bA$ be a collection of $d\times d$ matrices. Suppose that $\bA$ is fully strongly
	irreducible and fully proximal and $\|A\|<1/2$ for all $A\in\bA$. If $\beta<1$ then
	$$
	\dim_HS_{\bt}(\psi)=\min\{d,r_0\}\text{  for Lebesgue-almost
		every $\bt$, }
	$$
	where $r_0$ is the unique solution of the equation \eqref{eq:rootsquare}. Moreover, $\mathcal{L}_d(S_{\bt}(\psi))>0$ for Lebesgue-almost every $\bt$ if $r_0>d$.
\end{corollary}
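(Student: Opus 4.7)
The plan is to derive Corollary \ref{cor:mainTheorem2b} as a direct consequence of Theorem \ref{thm:mainTheorem2}. The hypotheses of the theorem require three things: Condition \ref{thm:injection}, the uniform bound $\|A\|<1/2$ for all $A\in\bA$, and $\beta<1$. The last two are assumed verbatim in the corollary, so the entire argument reduces to verifying Condition \ref{thm:injection} for the matrix tuple $\bA$.

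To obtain Condition \ref{thm:injection}, I would split the parameter range into the two pieces dictated by the definition of the singular value function. For $0<s<d$, Proposition \ref{prop:multicond} is tailor-made: its hypothesis of full proximality and full strong irreducibility is exactly what the corollary assumes, and its conclusion is precisely the inequality required by Condition \ref{thm:injection}. For $s\geq d$, the singular value function reduces to $\phi^s(A)=|\det A|^{s/d}$, which is multiplicative. Hence for any fixed $K\geq 1$ and any $\kkk\in\Sigma_K$ one has $\phi^s(A_{\iii\kkk\jjj})=\phi^s(A_\iii)\phi^s(A_\kkk)\phi^s(A_\jjj)$; since there are only finitely many $A_i$, all non-singular, the factor $\phi^s(A_\kkk)$ is uniformly bounded below by a positive constant depending only on $s$, $K$ and $\bA$. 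Thus Condition \ref{thm:injection} holds across the full range $s>0$.

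With Condition \ref{thm:injection} in force, Theorem \ref{thm:mainTheorem2} yields directly both $\dim_H S_\bt(\psi)=\min\{d,r_0\}$ for Lebesgue-almost every $\bt$ and the positivity $\mathcal{L}_d(S_\bt(\psi))>0$ when $r_0>d$. The main conceptual obstacle is already absorbed into Proposition \ref{prop:multicond}, whose analysis of the induced dynamics of $\widehat{A}$ on the flag space $W$ is what bridges the geometric assumptions of proximality and strong irreducibility with the quasi-multiplicative behaviour of $\phi^s$ needed by the theorem; once that proposition is granted, the present corollary is a short deduction.
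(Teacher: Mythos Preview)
Your proposal is correct and follows exactly the route the paper intends: the paper states both corollaries as ``immediate'' consequences of Proposition~\ref{prop:multicond} combined with the corresponding main theorem, and you spell this out for Theorem~\ref{thm:mainTheorem2}. You are in fact slightly more careful than the paper, since Proposition~\ref{prop:multicond} only covers $0<s<d$ while Condition~\ref{thm:injection} is stated for all $s>0$; your determinant argument for $s\geq d$ cleanly closes that gap, which the paper leaves implicit.
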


\paragraph{Structure.}
We prove Theorem \ref{thm:mainTheorem} in Section~\ref{sec:proofst} and Theorem~\ref{thm:mainTheorem2} in Section~\ref{sec:proofr} using Condition \ref{thm:injection}.
First, we derive elementary results on the inverse lower pressure $\alpha$ defined in \eqref{eq:inverselower} in
Section~\ref{sec:basics}. We will also recall results about the pressure $P$ and prove the uniqueness of the solution of $P(s_0) =
\alpha(s_0)$. We proceed in Section~\ref{sec:upperBound} by proving the upper bound to
Theorem~\ref{thm:mainTheorem} and finish the lower bound proof in Section~\ref{sec:lowerBound} with
an energy estimate. Similarly, Section~\ref{sec:ubrec} is devoted to show the upper bound and Section~\ref{sec:lbrec} is to show the lower bound of Theorem~\ref{thm:mainTheorem2}. Section~\ref{sec:condition} contains the proof of Proposition~\ref{prop:multicond}, which shows that the assumptions in Corollary~\ref{cor:mainTheorem1b} and \ref{cor:mainTheorem2b} are sufficient.

\section{Dimension of shrinking targets}\label{sec:proofst}

\subsection{Basic properties and the inverse lower pressure function}\label{sec:basics}

Let $(\lambda_k)_{k\in\N} \in(\Sigma_*)^{\N}$ be a sequence and let $\alpha$ be the corresponding inverse lower pressure defined in \eqref{eq:inverselower}.

\begin{lemma}
  \label{thm:zeroEverywhere}
  If $\alpha(s)=0$ for some $s>0$, then $\liminf_{n\to\infty} |\lambda_n|/n =0$.
  Conversely, if\linebreak $\liminf_{n\to\infty}|\lambda_n|/ n =0$, then $\alpha(t)=0$ for all $t>0$.

  In particular, if there exists $s>0$ such that $\alpha(s)=0$, then $\alpha(t)=0$ for all $t\geq
  0$.
\end{lemma}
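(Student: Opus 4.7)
The plan is to establish two-sided estimates sandwiching $\alpha(s)$ between constant multiples of $\liminf_k |\lambda_k|/k$, from which both directions of the lemma (and therefore the final ``in particular'' statement) follow immediately.

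First I would set $c := \max_{i} \|A_i\| < 1$ and $M := \max_i \|A_i^{-1}\|$; these are finite because the matrices are contractions and non-singular. Using that every singular value $\alpha_j(A_{\iii})$ is bounded above by the operator norm $\|A_{\iii}\| \leq c^{|\iii|}$, and inspecting the two cases in the definition of $\phi^t$, one sees directly that
\[
\phi^{s}(A_{\iii}) \leq c^{\,s|\iii|}
\qquad\text{for every } s\geq 0 \text{ and every } \iii\in\Sigma_{*}.
\]
Symmetrically, since $\alpha_d(A_{\iii}) = 1/\|A_{\iii}^{-1}\| \geq M^{-|\iii|}$ and $\alpha_j(A_{\iii})\geq \alpha_d(A_{\iii})$ for all $j$, I would check that in both regimes $0\leq t<d$ and $t\geq d$ one obtains
\[
\phi^{t}(A_{\iii}) \geq \alpha_d(A_{\iii})^{t} \geq M^{-t|\iii|}.
\]

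Taking logarithms, dividing by $k$, and passing to the $\liminf$ with $\iii=\lambda_k$, these two bounds yield
\[
s\,|\log c|\cdot \liminf_{k\to\infty}\frac{|\lambda_k|}{k}
\;\leq\; \alpha(s) \;\leq\; s\,\log M\cdot \liminf_{k\to\infty}\frac{|\lambda_k|}{k},
\]
valid for every $s>0$. From the left inequality, $\alpha(s)=0$ for some $s>0$ forces $\liminf_k|\lambda_k|/k=0$, which is the first assertion. From the right inequality, $\liminf_k|\lambda_k|/k=0$ forces $\alpha(t)=0$ for every $t>0$, which is the converse. The ``in particular'' clause then follows by chaining the two implications, noting that $\alpha(0)=0$ trivially because $\phi^{0}\equiv 1$.

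There is essentially no obstacle here; the only thing to be careful about is the case split in the definition of $\phi^t$ at $t=d$, and the observation that both the upper bound via $\alpha_1\leq \|A_{\iii}\|$ and the lower bound via $\alpha_d\geq 1/\|A_{\iii}^{-1}\|$ behave submultiplicatively along concatenations so that the per-letter estimates compound to the required exponential bounds in $|\lambda_k|$.
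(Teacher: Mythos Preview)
Your proposal is correct and takes essentially the same approach as the paper: both arguments rest on the elementary sandwich $\underline\gamma^{s|\iii|}\leq\phi^s(A_{\iii})\leq\gamma^{s|\iii|}$ (your constants $c$ and $M^{-1}$ are exactly the paper's $\gamma=\max_i\alpha_1(A_i)$ and $\underline\gamma=\min_i\alpha_d(A_i)$). The only cosmetic difference is that you package the two bounds into a single two-sided inequality for $\alpha(s)$, while the paper treats each implication via a separate subsequence argument.
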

\begin{proof}
  Let $\gamma = \max_{i\in\Sigma_1}\{\alpha_1(A_i)\}$ and $\underline\gamma =
  \min_{i\in\Sigma_1}\{\alpha_d(A_i)\}$.
  Observe that by definition 
  \begin{equation}\label{eq:bounds}
    \underline\gamma^{s|\iii|}\leq\phi^s(A_{\iii}) \leq \gamma^{s|\iii|}.
  \end{equation}
  Assume that $\alpha(s)=0$. Since $-1/n \log \phi^s(A_{\lambda_n})\geq 0$, this implies that there is a
  subsequence $n_k$ such that $1/n_k \log \phi^s(A_{\lambda_{n_k}})\nearrow 0$. But then $1/n_k \log
  \gamma^{s|\lambda_{n_k}|} = s|\lambda_{n_k}|/n_k \log \gamma \nearrow 0$ and so
  $|\lambda_{n_k}|/n_k\searrow 0$, as required.

  For the other direction assume $|\lambda_{n_k}|/n_k \to 0$ for some subsequence $n_k$. Then, for any
  $t \geq 0$,
  \[
    \alpha(t) = \liminf_{n\to\infty}-\frac{1}{n}\log\phi^t(A_{\lambda_n})
    \leq \liminf_{n\to\infty}-\frac{1}{n}\log \underline\gamma^{t|\lambda_n|}
    \leq \liminf_{k\to\infty}\frac{|\lambda_{n_k}|}{n_k}t(-\log\underline\gamma)
    \leq 0.
  \]
  Combining this with the trivial inequality $\alpha(t)\geq 0$ we get the desired conclusion that
  $\alpha(t)=0$ for all $t\geq 0$.
\end{proof}
Similarly, if the modified pressure function is extremal in the other direction it must be extremal
everywhere. 
\begin{lemma}
  \label{thm:upperLimit}
 If $\alpha(s)=\infty$ for some $s>0$, then $\lim_{n\to\infty} |\lambda_n|/n =\infty$.
  Conversely, if\linebreak $\lim_{n\to\infty}|\lambda_n|/ n =\infty$, then $\alpha(t)=\infty$ for every $t>0$.

  In particular, if there exists $s>0$ such that $\alpha(s)=\infty$, then $\alpha(t)=\infty$ for all $t>0$.
\end{lemma}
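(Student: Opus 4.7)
The plan is to mirror the proof of Lemma~\ref{thm:zeroEverywhere}, using the same two-sided bound
\[
\underline\gamma^{s|\iii|}\leq\phi^s(A_{\iii})\leq\gamma^{s|\iii|},
\]
where $\gamma=\max_i\alpha_1(A_i)<1$ and $\underline\gamma=\min_i\alpha_d(A_i)<1$, so that $-\log\gamma$ and $-\log\underline\gamma$ are both strictly positive constants. The only real difference with the previous lemma is that the inequalities on $-\tfrac{1}{n}\log\phi^s(A_{\lambda_n})$ are used in the opposite direction, and one must be careful that $\alpha(s)=\infty$ is a statement about a $\liminf$.

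For the first implication, suppose $\alpha(s)=\infty$ for some $s>0$. A $\liminf$ equal to $+\infty$ forces the whole sequence $-\tfrac{1}{n}\log\phi^s(A_{\lambda_n})$ to diverge to $+\infty$. Combining this with the upper estimate
\[
-\tfrac{1}{n}\log\phi^s(A_{\lambda_n})\leq \tfrac{s|\lambda_n|}{n}(-\log\underline\gamma),
\]
and using that $-\log\underline\gamma$ is a positive constant, one concludes $|\lambda_n|/n\to\infty$.

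For the converse, assume $|\lambda_n|/n\to\infty$. For any $t>0$, the lower bound $\phi^t(A_{\lambda_n})\leq\gamma^{t|\lambda_n|}$ gives
\[
-\tfrac{1}{n}\log\phi^t(A_{\lambda_n})\geq \tfrac{t|\lambda_n|}{n}(-\log\gamma),
\]
whose right-hand side tends to $+\infty$. Taking $\liminf$ yields $\alpha(t)=\infty$.

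The final assertion (if $\alpha(s)=\infty$ for some $s>0$ then $\alpha(t)=\infty$ for all $t>0$) follows by composing the two directions: the first step produces $|\lambda_n|/n\to\infty$, and the second step then upgrades this to $\alpha(t)=\infty$ for every $t>0$. There is no substantive obstacle here; the only point requiring a word of care is observing that a $\liminf$ of a sequence in $[0,\infty]$ equal to $+\infty$ is equivalent to the sequence converging to $+\infty$, which is what allows us to convert the hypothesis $\alpha(s)=\infty$ into a statement about every $n$ large enough.
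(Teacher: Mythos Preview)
Your argument is correct and is exactly the analogue of the proof of Lemma~\ref{thm:zeroEverywhere} that the paper invokes (the paper omits the proof, noting only that it is analogous). One tiny wording slip: in the converse direction you call $\phi^t(A_{\lambda_n})\leq\gamma^{t|\lambda_n|}$ ``the lower bound'', but it is the upper bound on $\phi^t$; the inequality and its use are nonetheless correct.
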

The proof is analogous to that of Lemma \ref{thm:zeroEverywhere} and is left to the reader.

\begin{lemma}\label{thm:alfaprop}
  Assume that $0<\liminf_{k\to\infty}|\lambda_k|/k<\infty$.
  Then the function $\alpha(t)$ is strictly monotone increasing, and continuous in $t$. Moreover, $\alpha(t)\to\infty$ as $t\to \infty$ and $\alpha(0) = 0$.
\end{lemma}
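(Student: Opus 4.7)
Set $c := \liminf_k |\lambda_k|/k \in (0,\infty)$ and retain the notation $\gamma = \max_i \alpha_1(A_i) < 1$ and $\underline\gamma = \min_i \alpha_d(A_i) > 0$ from the proof of Lemma~\ref{thm:zeroEverywhere}. My plan is to refine \eqref{eq:bounds} into a two-sided slope estimate for the singular value function and then transfer it to $\alpha$ by bookkeeping. The key preliminary step is to show that for every $\iii \in \Sigma_*$ and every $0 \leq t_1 \leq t_2$,
\[
\underline\gamma^{(t_2 - t_1)|\iii|} \;\leq\; \frac{\phi^{t_2}(A_\iii)}{\phi^{t_1}(A_\iii)} \;\leq\; \gamma^{(t_2 - t_1)|\iii|}.
\]
This follows since submultiplicativity of the operator norm and its inverse force every singular value to lie in $\alpha_i(A_\iii) \in [\underline\gamma^{|\iii|}, \gamma^{|\iii|}]$, combined with the fact that $\frac{d}{dt}\log\phi^t(A_\iii)$ is piecewise constant, equal to one of the $\log\alpha_i(A_\iii)$ on $(0,d)$ and to their average on $(d,\infty)$. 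Writing $f_k(t) := -\frac{1}{k}\log\phi^t(A_{\lambda_k})$ and specialising to $t_1 = 0$ then yields
\[
t(-\log\gamma)\frac{|\lambda_k|}{k} \;\leq\; f_k(t) \;\leq\; t(-\log\underline\gamma)\frac{|\lambda_k|}{k}.
\]
Taking $\liminf$ and using $c \in (0,\infty)$ immediately gives $\alpha(0) = 0$, $ct(-\log\gamma) \leq \alpha(t) \leq ct(-\log\underline\gamma)$, so $\alpha(t) \to \infty$ as $t \to \infty$ and $\alpha$ is finite on bounded sets.

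\textbf{Strict monotonicity} then follows by rewriting the slope estimate as $f_k(t_2) - f_k(t_1) \geq (t_2-t_1)(-\log\gamma)|\lambda_k|/k$ and applying $\liminf(a_k + b_k) \geq \liminf a_k + \liminf b_k$: this gives $\alpha(t_2) \geq \alpha(t_1) + c(t_2-t_1)(-\log\gamma) > \alpha(t_1)$.

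\textbf{Continuity} is the step I expect to be the main obstacle, because the hypothesis only controls $\liminf |\lambda_k|/k$ and not $\limsup$, so the naive upper slope bound $\alpha(t_2) - \alpha(t_1) \leq (t_2 - t_1)(-\log\underline\gamma)\limsup|\lambda_k|/k$ may be vacuous. I would circumvent this with a subsequence argument that harnesses the finiteness of $\alpha$ itself. For right continuity at $t_0 > 0$, choose $(k_j)$ realising $\alpha(t_0) = \lim_j f_{k_j}(t_0)$; the lower estimate forces $|\lambda_{k_j}|/k_j \leq f_{k_j}(t_0)/(t_0(-\log\gamma))$ to stay bounded, and a further refinement gives $|\lambda_{k_j}|/k_j \to L < \infty$. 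Applying the slope estimate along $(k_j)$ then produces, for $s > t_0$,
\[
\alpha(s) \;\leq\; \limsup_j f_{k_j}(s) \;\leq\; \alpha(t_0) + (s-t_0)(-\log\underline\gamma)\, L,
\]
which combined with monotonicity yields right continuity. For left continuity at $t_0 > 0$, I would argue in the opposite direction: for each $s \in (t_0/2, t_0)$ pick $(k_j(s))$ realising $\alpha(s)$ and use the same lower estimate to bound $\limsup_j |\lambda_{k_j(s)}|/k_j(s) \leq \alpha(s)/(s(-\log\gamma)) \leq \alpha(t_0)/(s(-\log\gamma))$ uniformly in $s$; the slope estimate then gives
\[
\alpha(t_0) - \alpha(s) \;\leq\; (t_0-s)(-\log\underline\gamma) \cdot \frac{\alpha(t_0)}{s(-\log\gamma)} \;\xrightarrow[s \to t_0^-]{}\; 0.
\]
Continuity at $t_0 = 0$ is already implicit in the two-sided estimate for $f_k$.
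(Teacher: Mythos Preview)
Your proof is correct and follows essentially the same strategy as the paper: both rely on the two-sided slope estimate for $\phi^t$ (the upper half being the paper's Lemma~\ref{thm:singularratio}) and, for continuity, on the key observation that along a subsequence realising $\alpha(s)$ the ratios $|\lambda_{k_j}|/k_j$ are controlled by $\alpha(s)/(s(-\log\gamma))$. Your organisation is slightly more direct---you use $c=\liminf|\lambda_k|/k$ explicitly for strict monotonicity, whereas the paper derives a self-referential inequality $\alpha(t-\eps)\leq\alpha(t)\bigl(1+\tfrac{\eps\log\gamma}{(t-\eps)\log\underline\gamma}\bigr)^{-1}$---but the substance is the same.
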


\begin{proof}
Note that by Lemma~\ref{thm:zeroEverywhere} and Lemma~\ref{thm:upperLimit} the inverse lower pressure satisfies $0<\alpha(t)<\infty$
for all $t> 0$.  Letting $t=0$, we have
\[
\alpha(0) = \liminf_{n\to\infty}-\frac{1}{n}\log\phi^{0}(A_{\lambda_k})
=\log 1 =0.
\]

For every $k\in\N$,
$$
\frac{-1}{k}\log\varphi^s(A_{\lambda_k})\leq\frac{|\lambda_k|}{k}s(-\log\underline{\gamma}),
$$
and so
\begin{equation}\label{eq:liminflb}
s\liminf_{k\to\infty}\frac{|\lambda_k|}{k}\geq\frac{\alpha(s)}{-\log\underline{\gamma}}\text{ for every }s>0.
\end{equation}
This shows that $\alpha(t)$ is continuous at $t=0$.

For any $t>0$ and $\varepsilon>0$ sufficiently small we have
\begin{align*}
\alpha(t-\eps)     &\leq \liminf_{k\to\infty} - \frac{1}{k}\log\left(
\alpha_1(A_{\lambda_k})^{-\eps}\phi^{t}(A_{\lambda_k})\right)\\
&\leq \liminf_{k\to\infty} - \frac{1}{k} \log\left(
\gamma^{-\eps |\lambda_{k}|}\phi^t(A_{\lambda_k})\right)\\
&= \liminf_{k\to\infty}  \frac{1}{k} \left(
\eps|\lambda_k|\log\gamma-\log\phi^t(A_{\lambda_k})\right)\\
&\leq \limsup_{k\to\infty}  \frac{\eps|\lambda_k|\log\gamma}{k}
+\liminf_{k\to\infty}\left(-\frac{1}{k}\log\phi^t(A_{\lambda_k})\right)\\
&=
\alpha(t) - \eps\frac{\log\gamma}{(t-\varepsilon)\log\underline{\gamma}}\alpha(t-\varepsilon),
\end{align*}
where in the last inequality we applied \eqref{eq:liminflb} with $s=t-\varepsilon$. Hence,
\begin{equation}\label{eq:bound1}
\alpha(t-\varepsilon)\leq \alpha(t)\left(1+\frac{\varepsilon\log\gamma}{(t-\varepsilon)\log\underline{\gamma}}\right)^{-1},
\end{equation}
which shows that $\alpha(t)$ is strictly monotone increasing on $(0,\infty)$.

For an $s>0$, let $n_k(s)$ be a sequence for which the lower limit in $\alpha(s)$ is achieved. Then by \eqref{eq:bounds},
\[
\frac{-|\lambda_{n_k(s)}|s\log\gamma}{n_k(s)}\leq\frac{-1}{n_k(s)}\log\varphi^s(A_{\lambda_{n_k(s)}}).
\]
Hence for every $s>0$, 
\[
\limsup_{k\to\infty}\frac{|\lambda_{n_k(s)}|}{n_k(s)}\leq \frac{\alpha(s)}{-s\log\gamma}.
\]
This implies that
\begin{align*}
\alpha(t-\eps) &
=\lim_{k\to\infty}-\frac{1}{n_k(t-\eps)}\log\phi^{t-\eps}(A_{\lambda_{n_k(s)}})\\
& \geq \liminf_{k\to\infty} -\frac{1}{n_k(t-\eps)}\log \left(
\alpha_d(A_{\lambda_{n_k(t-\eps)}})^{-\eps}\phi^t(A_{\lambda_{n_k(t-\eps)}})\right)\\
& \geq
\liminf_{k\to\infty} - \frac{1}{n_k(t-\eps)}\log\left( \underline{\gamma}^{-\eps |\lambda_{n_k(t-\eps)}|} \varphi^t(A_{\lambda_{n_k(t-\eps)}})\right)\\
&=\liminf_{k\to\infty}\left(-\frac{1}{n_k(t-\eps)}\log\varphi^t(A_{\lambda_{n_k(t-\eps)}}) +
\eps\frac{|\lambda_{n_k(t-\eps)}|}{n_k(t-\eps)}\log\underline{\gamma}\right)\\
&\geq\liminf_{k\to\infty}\left(-\frac{1}{n_k(t-\eps)}\log\varphi^t(A_{\lambda_{n_k(t-\eps)}})\right) +
\eps\log\underline{\gamma}\limsup_{k\to\infty}\frac{|\lambda_{n_k(t-\eps)}|}{n_k(t-\eps)}\\
&\geq \alpha(t) - \eps\frac{\alpha(t-\varepsilon)\log\underline{\gamma}}{(t-\varepsilon)\log\gamma}.
\end{align*}
Thus,
$$
\alpha(t-\eps)\geq\alpha(t)\left(1+\frac{\eps\log\underline{\gamma}}{(t-\eps)\log\gamma}\right)^{-1},
$$
which together with \eqref{eq:bound1} implies continuity.

 To show the limit as $t\to\infty$, observe that 
\begin{equation*}
	\alpha(t) = \liminf_{k\to\infty}-\frac{1}{k}\log\phi^{t}(A_{\lambda_k})
	\geq \liminf_{k\to\infty} - \frac{1}{k}\log  \gamma^{t |\lambda_{k}|}
	\geq t(-\log\gamma)\liminf_{k\to\infty}\frac{|\lambda_k|}{k}.
	\qedhere
\end{equation*}
\end{proof}

\begin{lemma}
  \label{thm:uniqueintersection}
  Assume that $\liminf_{k\to\infty}|\lambda_k|/k<\infty$.
  Then there exists a unique $s_0>0$ such that $P(s_0)=\alpha(s_0)$. 
  Further, $P(s_0)\geq 0$.
\end{lemma}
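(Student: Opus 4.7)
The plan is to split into two cases according to the behavior of $\liminf_{k\to\infty}|\lambda_k|/k$ and in each case reduce the existence and uniqueness of $s_0$ to an intermediate value/monotonicity argument on the difference $g(t):=P(t)-\alpha(t)$.

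First I would handle the case $0<\liminf_{k\to\infty}|\lambda_k|/k<\infty$. Here Lemma~\ref{thm:alfaprop} gives that $\alpha$ is continuous and strictly increasing on $[0,\infty)$ with $\alpha(0)=0$ and $\alpha(t)\to\infty$ as $t\to\infty$. The properties of $P$ recalled after its definition say that $P$ is continuous and strictly decreasing with $P(0)=\log N>0$ (as $N\geq 2$) and $P(t)\to-\infty$. Therefore $g$ is continuous, strictly decreasing, satisfies $g(0)=\log N>0$ and $g(t)\to-\infty$, so it has a unique zero $s_0>0$. At this point $P(s_0)=\alpha(s_0)\geq\alpha(0)=0$ by monotonicity of $\alpha$, giving the final claim.

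Next I would treat the degenerate case $\liminf_{k\to\infty}|\lambda_k|/k=0$. By Lemma~\ref{thm:zeroEverywhere} this forces $\alpha(t)=0$ for every $t\geq 0$, so the equation $P(s_0)=\alpha(s_0)$ reduces to $P(s_0)=0$. Since $P$ is continuous and strictly decreasing from $P(0)=\log N>0$ to $-\infty$, a unique $s_0>0$ with $P(s_0)=0$ exists, and trivially $P(s_0)\geq 0$.

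There is no real obstacle here: the lemma is essentially a packaging of the monotonicity/continuity/limits of $P$ (already recorded in the text) with those of $\alpha$ (supplied by Lemma~\ref{thm:alfaprop}) together with Lemma~\ref{thm:zeroEverywhere} to dispose of the boundary situation $\liminf_k |\lambda_k|/k=0$. The only point requiring a moment's thought is making sure the strict-monotonicity of $\alpha$ on $(0,\infty)$ together with its right-continuity at $0$ (both of which are already stated in Lemma~\ref{thm:alfaprop}) suffice to guarantee that $g(0)>0$ and hence that the intersection occurs in the interior, yielding $s_0>0$ rather than $s_0=0$.
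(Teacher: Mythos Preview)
Your proof is correct and follows essentially the same approach as the paper: both split into the cases $\liminf_k|\lambda_k|/k>0$ and $\liminf_k|\lambda_k|/k=0$, invoke Lemma~\ref{thm:alfaprop} and Lemma~\ref{thm:zeroEverywhere} respectively, and then appeal to the intermediate value theorem together with strict monotonicity of $P(t)-\alpha(t)$. The only cosmetic difference is that the paper derives $P(s_0)\geq 0$ from the observation $\alpha(t)\geq 0$ for all $t$, whereas you phrase it via the monotonicity of $\alpha$; these are equivalent.
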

\begin{proof}
  If $\liminf_{k\to\infty}|\lambda_k|/k>0$ then the first statement follows by Lemma~\ref{thm:alfaprop} since $P(0)-\alpha(0)=\log N$, and $P(t)-\alpha(t)\to-\infty$ as $t\to\infty$ and $P(t)-\alpha(t)$ is  strictly motonone decreasing. If $\liminf_{k\to\infty}|\lambda_k|/k=0$ then by Lemma~\ref{thm:zeroEverywhere} $\alpha(t)\equiv0$ for $t\geq0$ and then the uniqueness of the solution follows by the uniqueness of the root of $P$.

  The second conclusion follows from the observation that $\alpha(t)\geq 0$ for all $t\geq 0$.
\end{proof}

The following lemma is standard, but we include it for completeness.
\begin{lemma}
  \label{thm:singularratio}
  Let $\iii\in\Sigma_*$ be a finite word and let $0<t < s$. Then,
  \begin{equation}
  \label{eq:inequalityratio}
\frac{\phi^s(A_{\iii})}{\phi^t(A_{\iii})} \leq \gamma^{(s-t)|\iii|} 
\end{equation}
  for some uniform $0<\gamma<1$.
\end{lemma}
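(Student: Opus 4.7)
The plan is to reduce the statement to the elementary inequality $\phi^s(A)/\phi^t(A)\leq\|A\|^{s-t}$ valid for any nonsingular contraction $A$ and any $0<t<s$, and then apply submultiplicativity of the operator norm.

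First I would set $\gamma=\max_{i\in\{1,\ldots,N\}}\|A_i\|$, which satisfies $\gamma<1/2<1$ by the standing assumption on $\bA$. Submultiplicativity of the operator norm gives $\|A_{\iii}\|\leq\gamma^{|\iii|}$ for every $\iii\in\Sigma_*$, and since $\alpha_1(A_{\iii})=\|A_{\iii}\|$ we have $\alpha_1(A_{\iii})\leq\gamma^{|\iii|}$.

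The key step is to show that $\phi^s(A)/\phi^t(A)\leq\alpha_1(A)^{s-t}$ for $0<t<s$. I would do this by case analysis on where $t$ and $s$ lie relative to the integer $d$. If both $t$ and $s$ lie in $[0,d]$, then $\phi^s(A)/\phi^t(A)$ can be written explicitly as a product of the form $\prod_{i=\lfloor t\rfloor+1}^{\lceil s\rceil}\alpha_i(A)^{e_i}$ with nonnegative exponents $e_i$ summing to $s-t$; since $\alpha_i(A)\leq\alpha_1(A)\leq 1$ for every $i$, each such factor is bounded by $\alpha_1(A)^{e_i}$, and multiplying gives $\alpha_1(A)^{s-t}$. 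If both $t,s\geq d$, then $\phi^s(A)/\phi^t(A)=(\alpha_1(A)\cdots\alpha_d(A))^{(s-t)/d}$, and since the geometric mean of the singular values is at most the largest, this is again bounded by $\alpha_1(A)^{s-t}$. The case $t<d\leq s$ is handled by combining the previous two via $\phi^s/\phi^t=(\phi^s/\phi^d)(\phi^d/\phi^t)$.

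Combining the two estimates yields
\[
\frac{\phi^s(A_{\iii})}{\phi^t(A_{\iii})}\leq\alpha_1(A_{\iii})^{s-t}\leq\gamma^{(s-t)|\iii|},
\]
which is the claim with the uniform constant $\gamma=\max_i\|A_i\|<1$. I do not anticipate any serious obstacle here: the only mild care needed is the bookkeeping in the case analysis to verify that the total exponent in the product equals $s-t$ and that each singular value is bounded above by $\alpha_1(A)$, which is immediate from $\alpha_1(A)\geq\alpha_2(A)\geq\cdots\geq\alpha_d(A)$.
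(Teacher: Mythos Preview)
Your proposal is correct and follows essentially the same approach as the paper: a three-case analysis according to the position of $t,s$ relative to $d$, leading to $\phi^s(A_{\iii})/\phi^t(A_{\iii})\leq\alpha_1(A_{\iii})^{s-t}$ and then concluding with $\gamma=\max_i\alpha_1(A_i)$ via submultiplicativity. The only cosmetic difference is that the paper handles the mixed case $t<d\leq s$ by a direct computation rather than your splitting $\phi^s/\phi^t=(\phi^s/\phi^d)(\phi^d/\phi^t)$, and you need not invoke the hypothesis $\|A_i\|<1/2$ since contractivity alone already gives $\gamma<1$.
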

\begin{proof}
  For $0<t<s<d$,
  \begin{align*}
    \frac{\phi^s(A_{\iii})}{\phi^t(A_{\iii})} 
    &=\frac{\alpha_1(A_{\iii})\dots \alpha_{\lfloor s\rfloor}(A_{\iii})\cdot\alpha_{\lfloor s\rfloor+1}(A_{\iii})^{s-\lfloor
    s\rfloor}}
    {\alpha_1(A_{\iii})\dots \alpha_{\lfloor t\rfloor}(A_{\iii})\cdot\alpha_{\lfloor t\rfloor+1}(A_{\iii})^{t-\lfloor
    t\rfloor}}\\
    &\leq
    \frac{\alpha_1(A_{\iii})\dots \alpha_{\lfloor t\rfloor}(A_{\iii})\cdot\alpha_{\lfloor
      t\rfloor+1}(A_{\iii})^{\lfloor s\rfloor - \lfloor t\rfloor}\alpha_{\lfloor t\rfloor+1}(A_{\iii})^{s-\lfloor
    s\rfloor}}
    {\alpha_1(A_{\iii})\dots \alpha_{\lfloor t\rfloor}(A_{\iii})\cdot\alpha_{\lfloor t\rfloor+1}(A_{\iii})^{t-\lfloor
    t\rfloor}}\\
    &= \alpha_{\lfloor t\rfloor+1}(A_{\iii})^{s-t}.
  \end{align*}
  Similarly,
  for $0<t<d \leq s$,
  \begin{align*}
    \frac{\phi^s(A_{\iii})}{\phi^t(A_{\iii})} 
    &=\frac{(\alpha_1(A_{\iii})\dots \alpha_{d}(A_{\iii}))^{s/d}}
    {\alpha_1(A_{\iii})\dots \alpha_{\lfloor t\rfloor}(A_{\iii})\cdot\alpha_{\lfloor t\rfloor+1}(A_{\iii})^{t-\lfloor
    t\rfloor}}\\
    &=\left(\alpha_1(A_{\iii})\dots \alpha_{\lfloor t\rfloor}(A_{\iii})\right)^{s/d-1}\alpha_{\lfloor t\rfloor+1}(A_{\iii})^{s/d-t+\lfloor
    	t\rfloor}\left(\alpha_{\lfloor t\rfloor+2}(A_{\iii})\cdots\alpha_{d}(A_{\iii})\right)^{s/d}\\\
    &\leq\alpha_1(A_{\iii})^{(s/d-1)\lfloor t\rfloor+s/d-t+\lfloor t\rfloor +s(d-\lfloor t\rfloor-1)/d}=\alpha_1(A_{\iii})^{s-t}.
  \end{align*}
  Finally, for $d\leq t <s$,
  \begin{align*}
    \frac{\phi^s(A_{\iii})}{\phi^t(A_{\iii})} 
    &=\frac{(\alpha_1(A_{\iii})\dots \alpha_{d}(A_{\iii}))^{s/d}}
    {(\alpha_1(A_{\iii})\dots \alpha_{d}(A_{\iii}))^{t/d}}
    = (\det(A_{\iii}))^{(s-t)/d}.
  \end{align*}
  We conclude that \eqref{eq:inequalityratio} holds for $\gamma :=
  \max_{i\in\Sigma_1}\{\alpha_1(A_i)\}$ by submultiplicativity.
\end{proof}

\subsection{Upper bound to Theorem \ref{thm:mainTheorem}}
\label{sec:upperBound}
Note that $R_{\bt}( (\lambda_k)_k)$ is a $\limsup$ set that can be written as
\[
  R_{\bt}( (\lambda_k)_k) = \bigcap_{k_0=1}^\infty \bigcup_{k=k_0}^\infty
  \bigcup_{\iii\in\Sigma_k} \pi_{\bt}([\iii \lambda_k]).
\]

Temporarily fix $t\geq 0$. By definition, for every $\delta>0$ there exists $k_0$ large enough such that 
\[
  -\frac{1}{k}\log\phi^t(A_{\lambda_k}) \geq \alpha(t)-\delta
\]
for all $k\geq k_0$.
This can be rearranged to give
\[
   \phi^t(A_{\lambda_k})\leq e^{-k(\alpha(t)-\delta)}.
\]
Similarly, for every $\delta>0$, we obtain
\[
  \sum_{\iii\in\Sigma_k}\phi^t(A_{\iii}) \leq e^{k(P(t)+\delta)}
\]
for large enough $k$.
For the lower bounds, we note that for all $\delta>0$ there exists a subsequence $k_n$ such that
\begin{equation}
  \label{eq:targetEstimate}
  \phi^t(A_{\lambda_{k_n}}) \geq e^{-k_n (\alpha(t)+\delta)}
\end{equation}
and for large enough $k$,
\begin{equation}
  \label{eq:pressureEstimate}
  \sum_{\iii\in\Sigma_k}\phi^t(A_{\iii}) \geq e^{k(P(t)-\delta)}
\end{equation}
by submultiplicativity and existence of the limit.

Assume that $\liminf_k|\lambda_k|/k<\infty$.
Let $s>s_0$ and note that $P(s)-\alpha(s)<0$. We set $\delta>0$ small enough such that
$\eta:=P(s)-\alpha(s)+2\delta <0$. Let $B$ be a ball with sufficiently large radius such that $f_i(B)\subset B$ for all $i=1,\ldots,N$. Hence, by using the cover given in \cite[Proof of Proposition~5.1]{Falconer88} we obtain
\begin{align*}
  \mathcal{H}^s(R_{\bt}( (\lambda_k)_k)) 
  &\leq \liminf_{k_0\to\infty}
  \sum_{k=k_0}^{\infty}\sum_{\iii\in\Sigma_k}
  \phi^s(A_{\iii\lambda_k})(4|B|)^{d}\\
  &\leq
  \liminf_{k_0\to\infty}\sum_{k=k_0}^{\infty}\sum_{\iii\in\Sigma_k}\phi^s(A_{\iii})\phi^s(A_{\lambda_k})(4|B|)^{d}\\
  &\leq
  \liminf_{k_0\to\infty}\sum_{k=k_0}^{\infty}e^{-k(\alpha(s)-\delta)}\sum_{\iii\in\Sigma_k}\phi^s(A_{\iii})(4|B|)^{d}\\
  &\leq \liminf_{k_0\to\infty}\sum_{k=k_0}^\infty e^{k (P(s)+\delta-\alpha(s)+\delta)}(4|B|)^{d}\\
  &=\liminf_{k_0\to\infty}\sum_{k=k_0}^{\infty}e^{\eta k}(4|B|)^{d}
  \leq \liminf_{k_0\to\infty}\frac{e^{\eta k_0}}{1-e^{\eta}}(4|B|)^{d} = 0.
\end{align*}
Since $s>s_0$ was arbitrary, we conclude that $\dim_H R_{\bt}( (\lambda_k)_k) \leq s_0$ for all $\bt$.

Finally, consider the case when $\liminf_k |\lambda_k|/k = \infty$.
Let $s>0$ be arbitrary and again write $\gamma = \max_{i\in\Sigma_1}\{\alpha_1(A_i)\}$. Recall that
$\#\Sigma_1=N$ and observe that there exists $M$ such that
$|\lambda_k| \geq 2k\log N/(s \log \gamma^{-1})$ for $k\geq M$. Therefore $\gamma^{s|\lambda_k|}\leq
N^{-2k}$ for large enough $k$. The Hausdorff measure bound above becomes
\begin{align*}
  \mathcal{H}^s(R_{\bt}( (\lambda_k)_k)) &\leq \liminf_{k_0\to\infty}
  \sum_{k=k_0}^{\infty}\sum_{\iii\in\Sigma_k}
  \phi^s(A_{\iii\lambda_k})(4|B|)^{d}\\
  &\leq
  \liminf_{k_0\to\infty}\sum_{k=k_0}^{\infty}\sum_{\iii\in\Sigma_k}\phi^s(A_{\iii})\phi^s(A_{\lambda_k})(4|B|)^{d}\\
  &\leq
  \liminf_{k_0\to\infty}\sum_{k=k_0}^{\infty}\gamma^{|\lambda_k|s}\sum_{\iii\in\Sigma_k}\phi^s(A_{\iii})(4|B|)^{d}\\
  &\leq \liminf_{k_0\to\infty}\sum_{k=k_0}^\infty N^{-2k} N^k(4|B|)^{d}\\
  &=\liminf_{k_0\to\infty}\sum_{k=k_0}^{\infty}N^{-k}(4|B|)^{d} =0.
\end{align*}
As $s>0$ was arbitrary, this shows that $\dim_H R_{\bt}( (\lambda_k)_k) =0$ for all $\bt$.
\qed

\subsection{Lower bound to Theorem \ref{thm:mainTheorem}}
\label{sec:lowerBound}

To simplify the exposition we will abuse notation slightly and write $\phi^s(\iii)$ instead of
$\phi^s(A_{\iii})$ for $\iii\in\Sigma_*$.
  
For every sufficiently large $p\in\N$ and $s<\min\{s_0,d\}$, we construct a measure $\nu^s_p$ on the symbolic space $\Sigma$ and investigate its projection under the self-affine
iterated function system.
Let $m_k$ be a sequence on which the lower limit in $\alpha(s)$ is achieved and take a very sparse
subsequence such that 
\begin{equation}
  \label{eq:growthcondition}
  \sum_{k=1}^n m_k\leq (1+2^{-n})m_n
  \quad
  \text{and}
  \quad
  m_n \geq 2^{n}\sum_{i=1}^{n-1}(|\lambda_{m_i}|+K),
\end{equation}
where $K$ is the length of the buffer word defined in Condition~\ref{thm:injection}. We may further assume, without loss of generality, that $m_1 \gg
p$ and that $m_k \geq 2^k$. By the pigeonhole principle there exists $1\leq \widehat{p}_0 \leq p+K$ such that $m_k = \widehat{p}_0+(K+p)q$ for
infinitely many $q$. Again, by taking subsequences, we may assume that $m_k$ is always of the form
$\widehat{p}_0+(K+p)q$ for some $q$.  If $\widehat{p}_0>K$ then we define $p_0:=\widehat{p}_0-K$ otherwise let $p_0:=\widehat{p}_0+p$.

We will obtain $\nu^s_p$ as the weak limit of descending measures $\nu^s_{p,k}:\Sigma\to [0,1]$. The
construction is fairly intricate and
involves splitting the measure into blocks of length $p$ with ``buffers''  of length $K$ in-between that are given
by Condition \ref{thm:injection}. However, at each position $m_\ell$, we want to append
$\lambda_{m_\ell}$. To ensure consistency of lengths, we need to slightly modify $\lambda_{m_\ell}$
by extending the words to be of length $p+q(K+p)$ for some $q\geq0$. To this end we define
$\lambda_{m_\ell}' = \lambda_{m_\ell}11\dots1$, where the number of symbol $1$'s is $p-|\lambda_{m_\ell}|\mod(K+p)$. Let 
	\[
	\Omega(k) = \begin{cases}
		\{\lambda'_{m_\ell}\} &\text{if } k=m_\ell-1 \text{ for some }\ell\in\N,\\
		\Sigma_p & \text{otherwise.}
	\end{cases}
	\]
For every $\iii_1,\iii_2\in\Sigma_*$ denote the word in Condition \ref{thm:injection} by $\kkk(\iii_1,\iii_2)\in\Sigma_{K}$. We define a collection of symbols $\cK_n$ by induction. Let $\cK_0:=\Sigma_{p_0}$  Suppose that $\cK_n$ is defined for some $n\geq0$. Then let us define $\cK_{n+1}$ as 
$$
\cK_{n+1} = \{\iii\kkk\jjj : 
\iii\in\cK_n, \jjj\in \Omega(|\iii\kkk|)\text{ and }\kkk =\kkk(\iii,\jjj)\}.
$$
To ease notation let $\ell_k$ denote the length of words in $\cK_k$. Observe that by construction, every $\iii\in\cK_n$ can be written of the form
$$
\iii=\iii_1\kkk_1\iii_2\kkk_2\dots\kkk_n\iii_{n+1},
$$
where for every $k\in\{2,\ldots,n+1\}$, $\iii_k\in\Omega(\ell_{k-1}+K)$ and $\kkk_k=\kkk(\iii_1\kkk_1\dots\kkk_{k-1}\iii_{k},\iii_{k+1})$. While the cylinders in $\cK_n$ consist of the same number of blocks ($n+1$) and buffers ($n$), their
lengths are not necessarily $p_0+n(p+K)$ due to the different lengths of $\lambda'_{m_i}$. Their
lengths are however, by
construction, always of length $p_0+q(p+K)$ for some integer $q\geq n$. 
  This ensures that we can construct a $\limsup$ set of codings $\cK$ by
\[
  \cK = \bigcap_{k}\{[\iii] : \iii\in\cK_k\}
\]
whose image $\pi_{\bt}(\cK)$ is a non-empty subset
of the $\limsup$ set $R_{\bt}((\lambda_k)_k)$ by choice of $p_0$.

Let $\eta(n)$ denote the number of $\lambda_{m_\ell}'$ blocks in $\cK_n$. Then
\begin{equation}\label{eq:length}
\ell_n=\sum_{i=1}^{\eta(n)}|\lambda_{m_i}'|+(n-\eta(n))p+Kn+p_0.
\end{equation}

We start by defining $\nu_{p,0}^s$ on cylinders of length no less than $p_0$ by
\[
  \nu_{p,0}^s([\iii\hhh]) = \frac{\phi^s(\iii)}{\sum_{\jjj\in\Sigma_{p_0}}\phi^s(\jjj)}N^{-|\hhh|}
\]
for $\iii\in\Sigma_{p_0}=\cK_0$ and $\hhh\in\Sigma_*$. This uniquely defines a probability measure on $\Sigma$, i.e.\ $\nu_{p,0}^s(\Sigma)=1$.

We define $\nu_{p,n}^s$ on cylinders with prefix in $\cK_{n}$ by
\[
  \nu_{p,n+1}^s(\iii) = \begin{cases}
    \displaystyle\frac{\phi^s(\iii_1)\phi^s(\iii_2)\dots\phi^s(\iii_{n+1})}{\displaystyle\sum_{\iii_1\kkk_1\dots\kkk_n\iii_{n+1}\in\cK_{n}}\hspace{-2em}\phi^s(\iii_1)\phi^s(\iii_2)\dots\phi^s(\iii_{n+1})}N^{-|\iii|+|\iii_1\kkk_1\dots\kkk_n\iii_{n+1}|}& \text{if
    }\iii\prec\iii_1\kkk_1\dots\kkk_n\iii_{n+1}\in\cK_{n},\\
    0 & \text{otherwise.}
  \end{cases}
\]
Observe that for any cylinder set $O\subseteq\Sigma$, the measures $\nu_{p,k}^s(O)$ are eventually monotone decreasing and hence $\nu_p^s(O) \leq \nu_{p,k}^s(O)$ for all sufficiently large $k\in\N$, where $\nu_p^s$ is the weak limit of $(\nu_{p,k}^s)_{k\in\N}$.

\begin{lemma}
  \label{thm:comparingFeng}
  Let $k\in\N_0$. Then, 
  \begin{multline}
  \label{eq:maximalSumCorrespondence}
  C^{k}\sum_{\substack{\iii_1\in\Sigma_{p_0}\\\iii_{j}\in\Omega(\ell_{j-1}+K)\\j\leq k+1}}\phi^s(\iii_1)\phi^s(\iii_2)\dots\phi^s(\iii_{k+1})\\
  \leq
  \sum_{\iii_1\kkk_1\dots\kkk_k\iii_{k+1}\in\cK_{k}}\hspace{-1em}\phi^s(\iii_1\kkk_1\dots\kkk_k\iii_{k+1})\\
  \leq\sum_{\substack{\iii_1\in\Sigma_{p_0}\\\iii_{j}\in\Omega(\ell_{j-1}+K)\\j\leq k+1}}\phi^s(\iii_1)\phi^s(\iii_2)\dots\phi^s(\iii_{k+1}),
\end{multline}
where $C$ is the constant appearing in Condition~\ref{thm:injection}.
\end{lemma}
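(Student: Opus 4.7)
The plan is to prove both inequalities by exploiting the inductive structure of $\cK_k$, using submultiplicativity of $\phi^s$ for the upper bound and iterated applications of Condition~\ref{thm:injection} for the lower bound.

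First I would establish the key bijection. By construction, every element of $\cK_k$ has the form $\iii_1\kkk_1\iii_2\kkk_2\cdots\kkk_k\iii_{k+1}$ where $\iii_1\in\Sigma_{p_0}=\cK_0$, each $\iii_{j+1}\in\Omega(\ell_j+K)$ for $j\geq 1$, and each buffer word is uniquely determined as $\kkk_j=\kkk(\iii_1\kkk_1\cdots\kkk_{j-1}\iii_j,\iii_{j+1})$. In particular the map sending the tuple $(\iii_1,\ldots,\iii_{k+1})$ to its concatenation with inserted buffers is a bijection between $\Sigma_{p_0}\times\prod_{j=2}^{k+1}\Omega(\ell_{j-1}+K)$ and $\cK_k$. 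Under this bijection, the middle sum in the statement is exactly a sum indexed by the same tuples as the outer sums, so the only thing to verify is the pointwise comparison of $\phi^s$ values.

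For the upper bound, I would use submultiplicativity of the singular value function together with the assumption $\|A_i\|<1/2$ for every $i$, which forces $\phi^s(A_{\kkk})\leq\|A_{\kkk}\|^{s}\leq 1$ for any finite word $\kkk$ (and any $s\geq 0$; the argument is straightforward for $s\geq d$ using the determinant form). Applying submultiplicativity repeatedly and dropping each buffer factor gives
\[
\phi^s(A_{\iii_1\kkk_1\iii_2\cdots\kkk_k\iii_{k+1}})\leq\phi^s(A_{\iii_1})\phi^s(A_{\kkk_1})\phi^s(A_{\iii_2})\cdots\phi^s(A_{\iii_{k+1}})\leq\prod_{j=1}^{k+1}\phi^s(A_{\iii_j}).
\]
Summing over $\cK_k$ via the bijection yields the right-hand inequality in \eqref{eq:maximalSumCorrespondence}.

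For the lower bound I would proceed by induction on $k$. The base case $k=0$ is trivial (with no buffers, $\phi^s(\iii_1)=\phi^s(\iii_1)$ and $C^0=1$). For the inductive step, suppose the lower bound has been established for $k-1$, and consider an element $\iii_1\kkk_1\cdots\kkk_k\iii_{k+1}\in\cK_k$. View $\iii_1\kkk_1\cdots\kkk_{k-1}\iii_k$ as a single word and apply Condition~\ref{thm:injection} with $\jjj=\iii_{k+1}$; the buffer $\kkk_k$ is exactly $\kkk(\iii_1\kkk_1\cdots\kkk_{k-1}\iii_k,\iii_{k+1})$ by definition of $\cK_k$, so
\[
\phi^s(A_{\iii_1\kkk_1\cdots\kkk_k\iii_{k+1}})\geq C\,\phi^s(A_{\iii_1\kkk_1\cdots\kkk_{k-1}\iii_k})\phi^s(A_{\iii_{k+1}}).
\]
Applying the induction hypothesis pointwise to $\phi^s(A_{\iii_1\kkk_1\cdots\kkk_{k-1}\iii_k})\geq C^{k-1}\phi^s(A_{\iii_1})\cdots\phi^s(A_{\iii_k})$ (which works because the induction hypothesis is actually a termwise statement, not only a sum statement) and then summing over the tuples produces the factor $C^k$ and the desired left-hand inequality.

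The main conceptual obstacle is simply checking that the buffer words $\kkk_j$ really are a deterministic function of the preceding blocks so the bijection between $\cK_k$ and the product set is genuine; this is essentially built into the definition of $\cK_{n+1}$, but one has to be careful that $\kkk(\cdot,\cdot)$ is a fixed choice of witness for Condition~\ref{thm:injection} rather than a multi-valued one. Apart from that, the proof is a clean induction plus submultiplicativity and requires no further pressure-theoretic input.
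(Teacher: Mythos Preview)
Your proof is correct and follows essentially the same approach as the paper: submultiplicativity plus $\phi^s(\kkk_j)<1$ for the upper bound, and induction via Condition~\ref{thm:injection} for the lower bound. The only cosmetic difference is that you phrase the induction as a termwise inequality and then sum, whereas the paper carries out the induction directly at the level of sums; your remark about the bijection between $\cK_k$ and the product of block sets is exactly the content of the paper's remark following the lemma.
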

\begin{remark}
  Observe that the summations in \eqref{eq:maximalSumCorrespondence} are all over the same set. We have
  changed the subscript to emphasise these two points of view of $\cK_k$ versus its constituent
parts.
\end{remark}
\begin{proof}
  The last inequality follows from the submultiplicativity of $\phi^s$ and that
  $\phi^s(\kkk_j)<1$.

  The first inequality follows inductively from repeated application of 
  Condition \ref{thm:injection} as follows:
  The base case $k=0$ follows trivially, since $\cK_0 = \Sigma_{p_0}$.
  For the induction step assume that \eqref{eq:maximalSumCorrespondence} holds for $k\geq 0$.
  Applying Condition \ref{thm:injection} to words in $\cK_{k+1}$ gives
  \begin{align*}
    \sum_{\iii_1\kkk_1\dots\kkk_{k+1}\iii_{k+2}\in\cK_{k+1}}\hspace{-1em}\phi^s(\iii_1\kkk_1\dots\kkk_{k+1}\iii_{k+2})
    &\geq C
    \sum_{\substack{\iii_1\kkk_1\dots\kkk_{k}\iii_{k+1}\in\cK_{k}\\\iii_{k+2}\in\Omega(\ell_{k+1}+K)}}\hspace{-1em}
    \phi^s(\iii_1\kkk_1\dots\kkk_k\iii_{k+1})\phi^s(\iii_{k+2})
    \intertext{and the induction hypothesis immediately gives}
    &\geq C^{k+1}\sum_{\substack{\iii_1\in\Sigma_{p_0}\\\iii_{j}\in\Omega(\ell_{j-1}+K)\\j\leq
    k+2}}\phi^s(\iii_1)\phi^s(\iii_2)\dots\phi^s(\iii_{k+2}),
  \end{align*}
  which completes the proof.
\end{proof}

The proof of Theorem \ref{thm:mainTheorem} reduces mainly to the following technical lemma.
\begin{lemma}  \label{thm:finiteEnergy}
  Let $s_0>0$ be such that $P(s_0)=\alpha(s_0)$. 
  Then for all $0<t<s<s_0$ and sufficiently large $p$,
  \begin{equation}\label{eq:doubleInt}
  	\iint_{\Sigma\times\Sigma}\frac{d\nu_p^s(\iii)d\nu_p^s(\jjj)}{\phi^{t}(A_{\iii\wedge\jjj})} < \infty. 
  \end{equation}
\end{lemma}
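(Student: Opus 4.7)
The plan is to use a classical $t$-energy estimate adapted to the block structure of $\nu_p^s$: decompose the double integral according to the position of $\iii\wedge\jjj$ relative to the skeleton $\{\ell_n\}_n$, reduce to estimating a per-level sum, and then extract geometric decay of this sum using the strict inequality $P(s)>\alpha(s)$ valid for $s<s_0$.

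For each $n\geq 0$ let $J_n$ denote the contribution from pairs $(\iii,\jjj)$ with $\ell_n\leq |\iii\wedge\jjj|<\ell_{n+1}$; such pairs share a cylinder in $\cK_n$ but not in $\cK_{n+1}$. Using the submultiplicative bound $\phi^t(AB)\geq \alpha_d(B)^t\phi^t(A)$ applied with $A=A_\hhh$ (for $\hhh\in\cK_n$ the common $\cK_n$-cylinder) gives $\phi^t(A_{\iii\wedge\jjj})\geq \underline{\gamma}^{t(\ell_{n+1}-\ell_n)}\phi^t(A_\hhh)$, so
\[
J_n\leq \underline{\gamma}^{-t(\ell_{n+1}-\ell_n)}\sum_{\hhh\in\cK_n}\nu_p^s([\hhh])^2\,\phi^t(A_\hhh)^{-1}.
\]
The prefactor $\underline{\gamma}^{-t(\ell_{n+1}-\ell_n)}$ equals $\underline{\gamma}^{-t(p+K)}$ whenever the $(n+2)$-th block is non-target; when it is a target, both words are forced to agree on that block so $J_n=0$. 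Thus the prefactor is a constant depending only on $p$.

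To estimate the level-$n$ sum I would combine three tools: the explicit formula $\nu_p^s([\hhh]) = \prod_{j=1}^{n+1}\phi^s(A_{\iii_j})/\prod_{j=1}^{n+1}S_j$, where $S_j=\sum_{\iii\in\Omega(\ell_{j-1}+K)}\phi^s(A_\iii)$; the iterated form of Condition~\ref{thm:injection} giving $\phi^s(A_\hhh)\geq C^n\prod_{j=1}^{n+1}\phi^s(A_{\iii_j})$; and Lemma~\ref{thm:singularratio}, giving $\phi^t(A_\hhh)^{-1}\leq \gamma^{(s-t)\ell_n}\phi^s(A_\hhh)^{-1}$. Using that $\sum_{\hhh\in\cK_n}\prod_{j}\phi^s(A_{\iii_j})=\prod_{j=1}^{n+1}S_j$ factorises, these assemble into
\[
\sum_{\hhh\in\cK_n}\nu_p^s([\hhh])^2\,\phi^t(A_\hhh)^{-1} \leq \frac{C^{-n}\gamma^{(s-t)\ell_n}}{\prod_{j=1}^{n+1}S_j}.
\]
For non-target blocks $S_j\geq e^{p(P(s)-\delta)}$ for $p$ large, by the definition of the pressure. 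For target blocks, Horn's inequality $\alpha_i(AB)\geq \alpha_i(A)\alpha_d(B)$ implies
\[
\phi^s(A_{\lambda'_{m_\ell}})\geq \underline{\gamma}^{s(p+K)}\phi^s(A_{\lambda_{m_\ell}})\geq \underline{\gamma}^{s(p+K)}e^{-m_\ell(\alpha(s)+\delta)}
\]
along the subsequence $(m_\ell)_\ell$ that realises $\alpha(s)$.

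Finally, summing $J_n$ over $n$ and invoking the growth condition~\eqref{eq:growthcondition}, which yields $\sum_{\ell\leq k}m_\ell\leq 2m_k$ and the sparsity $m_{k+1}\geq 2^{k+1}\sum_{i\leq k}(|\lambda_{m_i}|+K)$, the resulting exponent of $J_n$ is dominated by $-(n+1-k)p\bigl((s-t)|\log\gamma|+P(s)-\delta\bigr)$ plus a target-induced growth term of order $m_k(\alpha(s)+\delta)$ and terms polynomial in $n,k$, where $k=\eta(n+1)$ counts the target blocks in $\hhh\in\cK_n$. The main obstacle is balancing these two competing contributions: although $P(s)>\alpha(s)$ strictly for $s<s_0$, the per-target growth is exponential in $k$ via $m_k$, whereas the pressure decay is only proportional to the number of non-target blocks times $p$. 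The sparsity~\eqref{eq:growthcondition} is precisely what guarantees exponentially many non-target blocks between consecutive targets, so for $p$ sufficiently large the per-block pressure gain dominates the per-target loss, producing geometric decay of $J_n$ and hence the convergence of $\sum_n J_n$, which is equivalent to~\eqref{eq:doubleInt}.
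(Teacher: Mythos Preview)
Your approach is correct and is essentially the paper's own argument, just organised slightly differently: your level sets $J_n$, the observation that $J_n=0$ when the next block is forced, and the central inequality
\[
\sum_{\hhh\in\cK_n}\nu_p^s([\hhh])^2\,\phi^t(A_\hhh)^{-1}\leq\frac{C^{-n}\gamma^{(s-t)\ell_n}}{\prod_{j=1}^{n+1}S_j}
\]
obtained from Condition~\ref{thm:injection} and Lemma~\ref{thm:singularratio} are exactly what the paper does.

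One point in your final paragraph deserves sharpening. The sentence ``for $p$ sufficiently large the per-block pressure gain dominates the per-target loss'' misstates the mechanism: the $k$-th target contributes loss $m_k(\alpha(s)+\delta)$, which dwarfs any single block's gain. What actually happens is that the \emph{number} of non-target blocks preceding target $k$ is $\approx m_k/(p+K)$, so the cumulative pressure gain is $\approx m_k\tfrac{p}{p+K}(P(s)-\delta)$; this balances the cumulative target loss $\sum_{i\leq k}m_i(\alpha(s)+\delta)\leq(1+2^{-k})m_k(\alpha(s)+\delta)$ precisely because $P(s)>\alpha(s)$ (here $p$ large is needed only to make $\tfrac{p}{p+K}$ close to $1$). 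This shows the exponential factor is \emph{bounded}. The geometric decay that makes $\sum_n J_n$ converge comes from a separate source: the factor $C^{-n}\gamma^{(s-t)\ell_n}\leq (C^{-1}\gamma^{(s-t)p})^n$, and the condition on $p$ you need is $\gamma^{(s-t)p}<C$. Also, your ``terms polynomial in $n,k$'' include the linear term $n\log C^{-1}$, which is exactly what this last condition absorbs. With these two roles of $p$ disentangled, your sketch is complete.
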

\begin{proof}
	Let $p\in\N$ be large enough such that $\gamma^{(s-t)p}<C$, where $0<\gamma<1$ and $1>C>0$ are the
	constants appearing in Lemma \ref{thm:singularratio} and Condition \ref{thm:injection}, respectively. Since $s<s_0$, we have $P(s)>\alpha(s)$ and we can pick $\delta>0$ such that $P(s)-\alpha(s)>
	4\delta$ and choose $p$ (which so far only depends on the $C$ and $\gamma$) large enough such that we may apply
	\eqref{eq:targetEstimate} and \eqref{eq:pressureEstimate} with $\delta$, moreover, we
	require that $p\delta>KP(s)-2K\delta$.
	
  Recall that $\nu_p^s$ is supported on $\cK$ and note that for all distinct $\iii,\jjj\in\cK$, their longest common prefix $\iii\wedge\jjj$
  must be a word of the form
\(
  \iii_1\kkk_1\dots \iii_n\iii'
\)
for some $\iii'\in\Sigma_{\leq (p+K)}=\bigcup_{k=0}^{p+K}\Sigma_k$ and $n$ maximal. To see this, assume $|\iii'|> p+K$. Then, $\iii'$ must have a
prefix of the form $\kkk_n\lambda_{m_j}'$. But since all words $\hhh\in\cK$ satisfy
$(\sigma^{m_j}\hhh)|_{|\lambda_{m_j}'|}=\lambda_{m_j}'$, so must $\jjj$ and we obtain
\[
  \iii\wedge\jjj = \iii_1\kkk_1 \dots\iii_n \kkk_n \lambda_{m_j}' \iii''
\]
for some finite word $\iii''$. This however, contradicts the maximality of $n$ and our claim
follows.

Note further that by the boundedness of the length of $\iii'$ by $p+K$ as well as the
non-singularity of the matrices $A_i$, there exists a universal constant $D$ for the IFS
such that 
  \begin{equation}
    \label{eq:shortBounded}
    1/D^{t(p+2K)} \phi^{t}(\jjj) \leq \phi^{t}(\jjj\iii') \leq D^{t(p+2K)} \phi^{t}(\jjj).
  \end{equation}

  The double integral \eqref{eq:doubleInt}, together with \eqref{eq:shortBounded} simplifies to the following sum
  \[
    \sum_{n=0}^{\infty}\;\sum_{\substack{\iii\wedge\jjj=\iii_1\dots\iii_n\iii'\\\iii,\jjj\in\cK,
    \,\iii'\in\Sigma_{\leq
p+K}}}\frac{\nu_p^s([\iii|_{|\iii\wedge\jjj|}])\nu_p^s([\jjj|_{|\iii\wedge\jjj|}])}{\phi^{t}(\iii\wedge\jjj)}
\leq
D^{t(p+2K)}N^{p+K}\sum_{n=0}^{\infty}\sum_{\substack{\iii\in\cK_{n}}}\frac{\nu_p^s([\iii])^2}{\phi^{t}(\iii)}.
  \]
  Thus,
  \begin{align*}
    &\iint_{\Sigma\times\Sigma}\frac{d\nu_p^s(\iii)d\nu_p^s(\jjj)}{\phi^{t}(A_{\iii\wedge\jjj})}\\
    &\leq D^{t(p+2K)} N^{p+K}\sum_{n=0}^{\infty}
    \sum_{\iii\in\cK_n}\frac{\nu_{p,n}^s([\iii_1\kkk_1\dots\kkk_n\iii_{n+1}])^2}{\phi^{t}(\iii_1\kkk_1\dots\kkk_n\iii_{n+1})}\\
    &=D^{t(p+2K)} N^{p+K} \sum_{n=0}^{\infty} \sum_{\iii\in\cK_n} \left(
      \frac{\phi^s(\iii_1)\phi^s(\iii_2)\dots\phi^s(\iii_{n+1})}{\sum_{\jjj\in\cK_n}\phi^s(\jjj_1)\phi^s(\jjj_2)\dots\phi^s(\jjj_{n+1})}
	\right)^2
	\phi^{t}(\iii_1\kkk_1\dots\kkk_n\iii_{n+1})^{-1}
	\intertext{and by definition of $\cK_n$,}
	&= D^{t(p+2K)}N^{p+K} \sum_{n=0}^{\infty}\left(
      \frac{1}{\sum_{\jjj\in\cK_n}\phi^s(\jjj_1)\phi^s(\jjj_2)\dots\phi^s(\jjj_{n+1})}
    \right)^2\\
    &\hspace{26em}\cdot\sum_{\iii\in\cK_n}\frac{(\phi^s(\iii_1)\phi^s(\iii_2)\dots\phi^s(\iii_{n+1}))^2}{\phi^{t}(\iii_1\kkk_1\dots\kkk_n\iii_{n+1})}\\
    &\leq D^{t(p+2K)} N^{p+K} \sum_{n=0}^{\infty}\left(
      \frac{1}{\sum_{\jjj\in\cK_n}\phi^s(\jjj_1)\phi^s(\jjj_2)\dots\phi^s(\jjj_{n+1})}
    \right)^2\\
    &\hspace{15em}\cdot\sum_{\iii\in\cK_n}C^{-n}\cdot\frac{\phi^s(\iii_1)\phi^s(\iii_2)\dots\phi^s(\iii_{n+1})\phi^s(\iii_1\kkk_1\dots\kkk_n\iii_{n+1})}{\phi^{t}(\iii_1\kkk_1\dots\kkk_n\iii_{n+1})}\\
    &\leq D^{t(p+2K)} N^{p+K} \sum_{n=0}^{\infty}C^{-n}\gamma^{(s-t)\ell_n}\left( \sum_{\jjj\in\cK_n}\phi^s(\jjj_1)\phi^s(\jjj_2)\dots\phi^s(\jjj_{n+1})
    \right)^{-1}
\end{align*}
by Lemma \ref{thm:singularratio} for some $0<\gamma<1$. Again, let 
$\eta(n)$ denote the number of $\lambda_{m_l}$ blocks in $\cK_n$.
Using Lemma \ref{thm:comparingFeng}, we can bound
\begin{align*}
  &= D^{t(p+2K)} N^{p+K} \sum_{n=0}^{\infty}C^{-n}\gamma^{(s-t)\ell_n}\left( \sum_{\substack{\jjj_1\in\Sigma_{p_0}\\\jjj_{k}\in\Omega(\ell_{k-1}+K)\\k\leq n+1}}\phi^s(\jjj_1)\phi^s(\jjj_2)\dots\phi^s(\jjj_{n+1})
    \right)^{-1}\\
    &\leq c D^{t(p+2K)} N^{p+K}\sum_{n=1}^{\infty}C^{-n}\gamma^{(s-t)\ell_n}\left(
      \left( \sum_{\jjj\in\Sigma_p}\phi^s(\jjj) \right)^{n-\eta(n)}
      \cdot\prod_{i=1}^{\eta(n)}\phi^s(\lambda_{m_i}')
    \right)^{-1}
\intertext{for some $c>0$. Then by \eqref{eq:targetEstimate} and \eqref{eq:pressureEstimate} }
  &\leq c D^{t(p+2K)} N^{p+K}\sum_{n=1}^{\infty}C^{-n}\gamma^{(s-t)\ell_n}
  e^{-(n-\eta(n))p(P(s)-\delta)}\prod_{i=1}^{\eta(n)}e^{m_i(\alpha(s)+\delta)}.
  \intertext{Applying \eqref{eq:length} and $p\delta>KP(s)-2K\delta \Leftrightarrow
  \tfrac{p}{p+K}(P(s)-\delta)>P(s)-2\delta$ we get}
  &=c D^{t(p+2K)} N^{p+K}\sum_{n=1}^{\infty}C^{-n}\gamma^{(s-t)\ell_n}
  \exp\Bigg(-\left(\ell_n-\sum_{i=1}^{\eta(n)}|\lambda_{m_i}'|-K\eta(n)-p_0\right)\\
  &\hspace{25em}\cdot\frac{p}{p+K}(P(s)-\delta)+(\alpha(s)+\delta)\sum_{i=1}^{\eta(n)}m_i\Bigg)\\
  &\leq c' D^{t(p+2K)} N^{p+K}\sum_{n=1}^{\infty}C^{-n}\gamma^{(s-t)\ell_n}
  \exp\left(-\left(\ell_n-\sum_{i=1}^{\eta(n)}|\lambda_{m_i}'|-K\eta(n)\right)(P(s)-2\delta)+(\alpha(s)+\delta)\sum_{i=1}^{\eta(n)}m_i\right).
  \intertext{Clearly, $\ell_n\geq m_{\eta(n)}+|\lambda_{m_{\eta(n)}}'|$ and $\ell_n\geq\sum_{i=1}^{\eta(n)}|\lambda_{m_i}'|+ (n-\eta(n))p\geq np$ so}
&\leq c' D^{t(p+2K)} N^{p+K}\sum_{n=1}^{\infty}C^{-n}\gamma^{(s-t)pn}
\\&\hspace{12em}\cdot  \exp\left(-\left(m_{\eta(n)}-\sum_{i=1}^{\eta(n)-1}|\lambda_{m_i}'|-K\eta(n)\right)(P(s)-2\delta)+(\alpha(s)+\delta)\sum_{i=1}^{\eta(n)}m_i\right).
\intertext{Now we can apply \eqref{eq:growthcondition} to obtain,}
&\leq c'' D^{t(p+2K)} N^{p+K}\sum_{n=1}^{\infty}C^{-n}\gamma^{(s-t)pn}
\\&\hspace{12em}\cdot
\exp\left(-(1-2^{-n})m_{\eta(n)}(P(s)-2\delta)+(1+2^{-n})m_{\eta(n)}(\alpha(s)+\delta)\right)\\
&\leq c'' D^{t(p+2K)} N^{p+K}\sum_{n=1}^{\infty}C^{-n}\gamma^{(s-t)pn}
\exp\left(m_{\eta(n)}((1+2^{-n})\alpha(s)-(1-2^{-n})P(s)+3\delta)\right).
\end{align*}
Coupling this with the observation that $C^{-1}\gamma^{(s-t)p} <1$ and
$(1+2^{-n})\alpha(s)-(1-2^{-n})P(s))+3\delta<0$ for sufficiently large $n$, the expression above is
bounded by a geometric series with ratio less than one and hence is bounded.
It immediately follows that \eqref{eq:doubleInt} is
bounded and the $t$ energy of $\nu_p^s$ is finite, as required.
\end{proof}

\begin{proof}[Proof of Theorem~\ref{thm:mainTheorem}]
	To show that $\dim_H R_{\bt}((\lambda_k)_k)\geq s_0$ for Lebesgue-almost every $\bt$, it is enough to show that for every $t<s_0$ we have $\dim_H R_{\bt}((\lambda_k)_k)\geq t$ for Lebesgue-almost every $\bt$. 
	
	Let $t<s<s_0$ and $p$ be as in Lemma~\ref{thm:finiteEnergy}. By Frostman's lemma (see for example \cite[Chapter~8]{Mattila}), it is enough to show that
	$$
	\iiint\frac{d(\pi_{\bt})_*\nu_p^s(x)d(\pi_{\bt})_*\nu_p^s(y)}{|x-y|^t}d\bt<\infty.
	$$
	By \cite[Lemma~3.1]{Falconer88} and \cite[Proposition~3.1]{Solomyak98}, there exists a constant $C>0$ such that
	$$
	\iiint\frac{d(\pi_{\bt})_*\nu_p^s(x)d(\pi_{\bt})_*\nu_p^s(y)}{|x-y|^t}d\bt\leq C\iint\frac{d\nu_p^s(\iii)d\nu_p^s(\jjj)}{\varphi^t(A_{\iii\wedge\jjj})},
	$$
	where the right-hand side is finite by Lemma~\ref{thm:finiteEnergy}.
	
Now, let us turn to the proof that $\mathcal{L}_d(R_{\bt}((\lambda_k)_k))>0$ for Lebesgue-almost every $\bt$ if $s_0>d$. Let $s$ be such that $s_0>s>d$. It is enough to show that $(\pi_{\bt})_*\nu_p^s\ll\mathcal{L}_d$, and by \cite[Theorem~2.12]{Mattila}, to do so it is enough to prove that
$$
\iint\liminf_{r\to0}\frac{(\pi_{\bt})_*\nu_p^s(B(y,r))}{r^d}d(\pi_{\bt})_*\nu_p^s(y)d\bt<\infty.
$$
By \cite[Proof of Proposition~2]{JPS} and \cite[Proposition~3.1]{Solomyak98}, there exists $C>0$ such that
$$
\iint\liminf_{r\to0}\frac{(\pi_{\bt})_*\nu_p^s(B(y,r))}{r^d}d(\pi_{\bt})_*\nu_p^s(y)d\bt\leq C\iint\frac{d\nu_p^s(\iii)d\nu_p^s(\jjj)}{\varphi^d(A_{\iii\wedge\jjj})},
$$
where the right-hand side is finite again by Lemma~\ref{thm:finiteEnergy}.
\end{proof}

\section{Pointwise recurrent sets}\label{sec:proofr}

\subsection{Proof of the upper bound}\label{sec:ubrec}

Let $\psi\colon\N\mapsto\N$ and let $\beta=\liminf_{n\to\infty}\frac{\psi(n)}{n}$. Suppose that $\beta<1$. For a finite word $\iii\in\Sigma_*$, let $\overline{\iii}\in\Sigma$ be the infinite word $\overline{\iii}=\iii\iii\iii\cdots$. Note that $S_{\bt}(\psi)$ can be written as 
$$
S_{\bt}(\psi)=\bigcap_{k_0=1}^\infty\bigcup_{k=k_0}^\infty\bigcup_{\iii\in\Sigma_k}\pi_{\bt}([\overline{\iii}|_{k+\psi(k)}]).
$$
Analogous to the properties of $\alpha(t)$, the map $t\mapsto (1-\beta)P(t)-\beta P_2(t)$ is
continuous, strictly decreasing, converges to infinity and $(1-\beta)P(0)-\beta P_2(0)=\log N$. Thus,
there exists a unique solution $r_0>0$ of the equation $(1-\beta)P(r_0)=P_2(r_0)$. Let $t>r_0$ be
arbitrary.

Since $0<(1-\beta)P(t)<\beta P_2(t)$, one can choose $\delta>0$ sufficiently small such that 
$$
P(t)+P_2(t)-2\delta>0\text{ and }(1-\beta+\delta)P(t)-(\beta-\delta)(P_2(t)-2\delta)+\delta<0.
$$ 
There exists a constant $C>0$ such that for every $k\geq1$
$$
\sum_{\iii\in\Sigma_k}\varphi^t(\iii)\leq Ce^{k(P(t)+\delta)}\text{ and }\sum_{\iii\in\Sigma_k}\left(\varphi^t(\iii)\right)^2\leq Ce^{k(-P_2(t)+\delta)}.
$$
Moreover, we may assume that $C>0$ is sufficiently large such that $k(\beta-\delta)-C\leq\psi(k)$
for every $k\geq1$. Hence, writing $\psi(k)_k:=\psi(k)\mod k$, we obtain
\[
\begin{split}
\mathcal{H}^t(S_{\bt}(\psi))&\leq\liminf_{k_0\to\infty}\sum_{k=k_0}^\infty\sum_{\iii\in\Sigma_k}\varphi^t(\overline{\iii}|_{k+\psi(k)})\\
&\leq\liminf_{k_0\to\infty}\sum_{k=k_0}^\infty\sum_{\iii\in\Sigma_{\psi(k)_k}}\sum_{\jjj\in\Sigma_{k-\psi(k)_k}}\left(\varphi^t(\iii)\right)^{\lfloor\frac{k+\psi(k)}{k}\rfloor+1}\left(\varphi^t(\jjj)\right)^{\lfloor\frac{k+\psi(k)}{k}\rfloor}\\
&\leq\liminf_{k_0\to\infty}\sum_{k=k_0}^\infty\sum_{\iii\in\Sigma_{\psi(k)_k}}\sum_{\jjj\in\Sigma_{k-\psi(k)_k}}\left(\varphi^t(\iii)\right)^{2}\varphi^t(\jjj)\\
&\leq C^2\liminf_{k_0\to\infty}\sum_{k=k_0}^\infty e^{k(P(t)+\delta)-\psi(k)(P(t)+P_2(t)-2\delta)}\\
&\leq C^2e^{C(P(t)+P_2(t)-2\delta)}\liminf_{k_0\to\infty}\sum_{k=k_0}^\infty e^{k\left(P(t)+\delta-(\beta-\delta)(P(t)+P_2(t)-2\delta)\right)}=0.
\end{split}
\]
Since $t>r_0$ was arbitrary, the claim follows.

\begin{remark}
  We note that if $\beta>1$ then the argument above is not optimal.
\end{remark}

\subsection{Lower bound for Theorem~\ref{thm:mainTheorem2}}\label{sec:lbrec}

The proof is analogous to the lower bound of Theorem~\ref{thm:mainTheorem} with some necessary modifications. Let $p\in\N$ an integer which will be specified later. Let $m_k$ be a sequence on which the lower limit $\beta=\liminf_{n\to\infty}\psi(n)/n$ is achieved and take a sparse subsequence such that
\begin{equation}\label{eq:constr}
\sum_{k=1}^nm_k\leq(1+2^{-n})m_n\text{ and }m_n\geq2^n\sum_{k=1}^{n-1}(\psi(m_k)+K).
\end{equation}
Let us choose $p_0$ as in Section~\ref{sec:lowerBound}, so $m_k=p_0+(p+K)q$ for every $k\geq1$ for some $q\in\N$. To ensure consistency of lengths again, we need to slightly modify $\psi(m_\ell)$
by extending the words to be of length $p+q(K+p)$ for some $q\geq0$. To this end we define
$\psi'(\ell):= \psi(m_\ell)+k$, where $k=p-\psi(m_\ell)\mod(K+p)$.

We construct a measure $\nu^s_p$ similarly to Section~\ref{sec:lowerBound}, except that the elements in $\Omega(k)$ depend on the previous elements. More precisely, let 
\[
\Omega(\iii,k) = \begin{cases}
	\{\iii|_{\psi'(\ell)}\} &\text{if } k=m_\ell-1 \text{ for some }\ell\in\N,\\
	\Sigma_p & \text{otherwise.}
\end{cases}
\]
For every $\iii_1,\iii_2\in\Sigma_*$ denote the word in Condition \ref{thm:injection} by $\kkk(\iii_1,\iii_2)\in\Sigma_{K}$. We define a collection of symbols $\cK_n$ by induction. Let $\cK_0':=\Sigma_{p_0}$  Suppose that $\cK_n'$ is defined for some $n\geq0$. Then let us define $\cK_{n+1}'$ as 
$$
\cK_{n+1}' = \{\iii\kkk\jjj : 
\iii\in\cK_n', \jjj\in \Omega(\iii,|\iii\kkk|)\text{ and }\kkk =\kkk(\iii,\jjj)\}.
$$
Denote by $\ell_k'$ the length of words in $\cK_k'$. Observe that by construction, again every $\iii\in\cK_n$ can be written of the form
$$
\iii=\iii_1\kkk_1\iii_2\kkk_2\dots\kkk_n\iii_{n+1},
$$
where for every $k\in\{2,\ldots,n+1\}$, $\iii_k\in\Omega(\iii_1\kkk_1\dots\iii_{k-1},\ell_{k-1}+K)$ and $\kkk_k=\kkk(\iii_1\kkk_1\dots\kkk_{k-1}\iii_{k},\iii_{k+1})$.

Let $\eta'(n)$ denote the number of recurrences in $\cK_n$. Then
\begin{equation}\label{eq:length'}
	\ell_n=\sum_{i=1}^{\eta(n)}\psi'(i)+(n-\eta(n))p+Kn+p_0.
\end{equation}

We start by defining $\nu_{p,0}^s$ on cylinders of length no less than $p_0$ by
\[
\nu_{p,0}^s([\iii\hhh]) = \frac{\phi^s(\iii)}{\sum_{\jjj\in\Sigma_{p_0}}\phi^s(\jjj)}N^{-|\hhh|}
\]
for $\iii\in\Sigma_{p_0}=\cK_0$ and $\hhh\in\Sigma_*$. This uniquely defines a probability measure on $\Sigma$, i.e.\ $\nu_{p,0}^s(\Sigma)=1$.

We define $\nu_{p,n}^s$ on cylinders with prefix in $\cK_{n}$ by
\[
\nu_{p,n+1}^s(\iii) = \begin{cases}
	\displaystyle\frac{\phi^s(\iii_1)\phi^s(\iii_2)\dots\phi^s(\iii_{n+1})}{\displaystyle\sum_{\iii_1\kkk_1\dots\kkk_n\iii_{n+1}\in\cK_{n}}\hspace{-2em}\phi^s(\iii_1)\phi^s(\iii_2)\dots\phi^s(\iii_{n+1})}N^{-|\iii|+|\iii_1\kkk_1\dots\kkk_n\iii_{n+1}|}& \text{if
	}\iii\prec\iii_1\kkk_1\dots\kkk_n\iii_{n+1}\in\cK_{n},\\
	0 & \text{otherwise.}
\end{cases}
\]
Observe that for any open subset of $O\subseteq\Sigma$, the measures $\nu_{p,k}^s(O)$ are
monotone decreasing and hence $\nu_p^s(O) \leq \nu_{p,k}^s(O)$ for all $k\in\N$, where $\nu_p^s$ is
the weak limit of $(\nu_{p,k}^s)_{k\in\N}$.

\begin{lemma}\label{thm:finiteEnergy2}
	Let $r_0>0$ be such that $(1-\beta)P(r_0)=\beta P_2(r_0)$. 
	Then for all $0<t<s<r_0$ and sufficiently large $p$,
	\begin{equation*}
		\iint_{\Sigma\times\Sigma}\frac{d\nu_p^s(\iii)d\nu_p^s(\jjj)}{\phi^{t}(A_{\iii\wedge\jjj})} < \infty. 
	\end{equation*}
\end{lemma}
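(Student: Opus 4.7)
The plan is to mirror the proof of Lemma~\ref{thm:finiteEnergy}, with $\alpha(s)$ replaced by the square-pressure $P_2(s)$. The combinatorial mechanism producing $P_2$ is structural: each forced block $\iii|_{\psi'(j)}$ is a path-dependent copy of an initial segment of the word being built, so in the partition function of $\cK_n'$ every free block covered by a recurrence prefix contributes an extra factor of $\phi^s(\iii_k)$; summing over free blocks on those doubled positions produces $\sum_{\jjj}\phi^s(\jjj)^2\sim e^{-pP_2(s)}$ instead of $\sum_{\jjj}\phi^s(\jjj)\sim e^{pP(s)}$, exactly as in the upper bound calculation of Section~\ref{sec:ubrec} where the prefix of $\iii$ appears twice in $\overline{\iii}|_{k+\psi(k)}$.

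First I would verify that the common prefix structure is preserved: for distinct $\iii,\jjj$ in the support of $\nu_p^s$ one has $\iii\wedge\jjj=\iii_1\kkk_1\cdots\iii_n\iii'$ with $|\iii'|\leq p+K$. Indeed, if $\iii\wedge\jjj$ reached past a recurrence position $m_\ell$, then $\iii$ and $\jjj$ already agree on their first $\psi'(\ell)$ symbols, so the forced blocks at $m_\ell$ coincide and can be appended, contradicting maximality. The initial reductions of Lemma~\ref{thm:finiteEnergy} then transfer verbatim: apply \eqref{eq:shortBounded}, trade $\phi^t$ for $\phi^s$ via Lemma~\ref{thm:singularratio}, and extract $\phi^s(\iii)\geq C^n\phi^s(\iii_1)\cdots\phi^s(\iii_{n+1})$ via Lemma~\ref{thm:comparingFeng}, whose proof uses only Condition~\ref{thm:injection} and the blockwise structure of $\cK_n'$. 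This yields
\[
\iint \frac{d\nu_p^s(\iii)\,d\nu_p^s(\jjj)}{\phi^t(A_{\iii\wedge\jjj})} \leq c\,D^{t(p+2K)}N^{p+K}\sum_{n=0}^\infty C^{-n}\gamma^{(s-t)\ell_n'}\bigl(Z_n'\bigr)^{-1},
\]
with $Z_n':=\sum_{\iii\in\cK_n'}\phi^s(\iii_1)\cdots\phi^s(\iii_{n+1})$.

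The decisive new step is the lower bound on $Z_n'$. Parametrise $\cK_n'$ by the free blocks; each forced block $\iii_{r_j}=\iii|_{\psi'(j)}$ is then a determined prefix of the assembled word. Submultiplicativity of $\phi^s$ gives $\phi^s(\iii_{r_j})\leq\prod_{k\leq k_j}\phi^s(\iii_k)$ over the blocks covered by the prefix, while iterated application of Condition~\ref{thm:injection} (together with $\phi^s(\iii_{r_j}\iii'')\leq\phi^s(\iii_{r_j})\phi^s(\iii'')$ and $\phi^s(\iii'')\leq 1$ to absorb the final partial block into a constant) supplies the matching lower bound $\phi^s(\iii_{r_j})\geq C^{O(n)}\prod_{k\leq k_j}\phi^s(\iii_k)$. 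Substituting into $Z_n'$, each free block $\iii_k$ acquires multiplicity $1$ plus the number of recurrences whose prefix covers it. The sparsity \eqref{eq:constr} ensures that $m_{\eta'(n)}$ dominates all earlier recurrences, so outside a negligible initial portion of total length $\sum_{j<\eta'(n)}\psi(m_j)\ll m_{\eta'(n)}$ the multiplicity is exactly $1$ on free blocks lying after the dominant prefix and exactly $2$ inside it. The $\sim\beta\,\ell_n'/p$ doubled positions contribute a factor $\bigl(\sum_{\jjj}\phi^s(\jjj)^2\bigr)^{\beta\ell_n'/p}\gtrsim\exp(-\beta\ell_n'(P_2(s)+\delta))$, and the remaining $(1-\beta)\ell_n'/p$ positions contribute $\bigl(\sum_{\jjj}\phi^s(\jjj)\bigr)^{(1-\beta)\ell_n'/p}\gtrsim\exp((1-\beta)\ell_n'(P(s)-\delta))$, so
\[
Z_n'\gtrsim C^{O(n)}\exp\bigl(\ell_n'\bigl[(1-\beta)P(s)-\beta P_2(s)-O(\delta)\bigr]\bigr).
\]

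Plugging this back, the $n$-th summand is bounded by $C^{-O(n)}\gamma^{(s-t)\ell_n'}\exp(-\ell_n'[(1-\beta)P(s)-\beta P_2(s)-O(\delta)])$. Since $s<r_0$ and $t\mapsto(1-\beta)P(t)-\beta P_2(t)$ is strictly decreasing (cf.\ Section~\ref{sec:ubrec}), this bracket is positive; taking $\delta$ small and $p$ large enough that $\gamma^{(s-t)p}<C$, exactly as in Lemma~\ref{thm:finiteEnergy}, the series becomes geometric with ratio strictly less than one and the energy is finite. The main technical obstacle I anticipate is precisely the lower bound on $Z_n'$: one has to carefully account for multiplicities arising from overlapping recurrence prefixes, the buffer words $\kkk_k$, and the $O(1)$ partial-block corrections at prefix endpoints, and then invoke \eqref{eq:constr} to show that only the dominant recurrence contributes at the leading exponential rate. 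The hypothesis $\beta<1$ enters here essentially, ensuring $\psi(m_j)<m_j$ so that the forced block at $m_j$ is a well-defined function of the strictly earlier part of the path, avoiding the self-referential issue noted after Theorem~\ref{thm:mainTheorem2}.
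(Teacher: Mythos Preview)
Your proposal is correct and follows essentially the same route as the paper: reduce the energy to $\sum_n C^{-n}\gamma^{(s-t)\ell_n'}(Z_n')^{-1}$, then lower-bound $Z_n'$ by exploiting that the dominant forced block at $m_{\eta'(n)}$ duplicates the length-$\psi'(\eta'(n))$ prefix, so those blocks contribute a squared sum $\sum_{\jjj}\phi^s(\jjj)^2\sim e^{-pP_2(s)}$ while the remainder contributes $\sum_{\jjj}\phi^s(\jjj)\sim e^{pP(s)}$, and the earlier recurrences are absorbed into a $\underline\gamma^{\,O(\sum_{i<\eta(n)}\psi(m_i))}$ correction via \eqref{eq:constr}. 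The only cosmetic difference is bookkeeping: the paper packages the prefix as a single element $\jjj'\in\cK_{k_n}'$ (with $k_n$ the least index satisfying $\ell_{k_n}\geq\psi(m_{\eta(n)})$), bounds both the prefix product and the dominant forced factor below by $\phi^s(\jjj')$ to obtain $\sum_{\jjj'\in\cK_{k_n}'}\phi^s(\jjj')^2$, and only then decomposes into block sums, whereas you track free-block multiplicities directly---the resulting exponential rates coincide.
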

\begin{proof}
	By similar argument to the beginning of Lemma~\ref{thm:finiteEnergy}, it is enough to show that
	$$
	\mathcal{I}=\sum_{n=0}^\infty\sum_{\iii\in\cK_n'}\frac{\nu_p^s([\iii])^2}{\varphi^t(\iii)}<\infty.
	$$
	 \begin{align*}
		\mathcal{I}&=\sum_{n=0}^{\infty}
		\sum_{\iii\in\cK_n'}\frac{\nu_{p,n}^s([\iii_1\kkk_1\dots\kkk_n\iii_{n+1}])^2}{\phi^{t}(\iii_1\kkk_1\dots\kkk_n\iii_{n+1})}\\
		&=\sum_{n=0}^{\infty} \sum_{\iii\in\cK_n} \left(
		\frac{\phi^s(\iii_1)\phi^s(\iii_2)\dots\phi^s(\iii_{n+1})}{\sum_{\jjj\in\cK_n}\phi^s(\jjj_1)\phi^s(\jjj_2)\dots\phi^s(\jjj_{n+1})}
		\right)^2
		\phi^{t}(\iii_1\kkk_1\dots\kkk_n\iii_{n+1})^{-1}\\
		&=\sum_{n=0}^{\infty}\left(
		\frac{1}{\sum_{\jjj\in\cK_n'}\phi^s(\jjj_1)\phi^s(\jjj_2)\dots\phi^s(\jjj_{n+1})}
		\right)^2\cdot\sum_{\iii\in\cK_n}\frac{(\phi^s(\iii_1)\phi^s(\iii_2)\dots\phi^s(\iii_{n+1}))^2}{\phi^{t}(\iii_1\kkk_1\dots\kkk_n\iii_{n+1})}\\
		&\leq \sum_{n=0}^{\infty}C^{-n}\gamma^{(s-t)\ell_n}\left( \sum_{\jjj\in\cK_n'}\phi^s(\jjj_1)\phi^s(\jjj_2)\dots\phi^s(\jjj_{n+1})
		\right)^{-1}.
	\end{align*}
Denote by $\eta(n)$ the number of returns in $\cK_n'$. By definition, $m_{\eta(n)}$ is the position of the last return, and it returns to
$[\jjj|_{\psi(m_{\eta(n)})}]$. Unfortunately, $\jjj|_{\psi(m_{\eta(n)})}$ is not necessarily an
element of $\cK_k'$ for all $k>0$. Let $k_n$ be the smallest integer such that
$\psi(m_{\eta(n)})\leq \ell_{k_n}$, where we recall that $\ell_n$ is the length of the elements of
$\cK_n'$. Clearly, for every $\jjj=\jjj_1\kkk_1\dots\kkk_{k_n}\jjj_{k_n+1}\in\cK_{k_n}'$
$$
\phi^s(\jjj_1)\phi^s(\jjj_2)\dots\phi^s(\jjj_{k_n+1})\geq\phi^s(\jjj),
$$
and for $\jjj\in\cK_n'$ $\phi^s(\jjj|_{\psi(m_{\eta(n)})}))\geq\phi^s(\jjj')$, where $\jjj'$ is the unique element in $\cK_{k_n}'$ such that $\jjj\prec\jjj'$. Moreover, for every $\jjj\in\cK_n'$ there are $n-\eta(n)-(k_n-\eta(k_n))$-many $\Sigma_p$ components in the sequence $\sigma^{\ell_{k_n}}\jjj$. Hence, we obtain that
\begin{align*}
	&\leq\sum_{n=1}^{\infty}C^{-n}\gamma^{(s-t)\ell_n}\left(
	\left( \sum_{\jjj\in\Sigma_p}\phi^s(\jjj) \right)^{n-\eta(n)-(k_n-\eta(k_n))}\cdot \left( \sum_{\jjj\in\cK_{k_n}'}\phi^s(\jjj)^2 \right)
	\cdot\underline{\gamma}^{s\sum_{i=1}^{\eta(n)-1}\psi(m_i)}
	\right)^{-1}
	\end{align*}
	\begin{align*}
	&\leq \sum_{n=1}^{\infty}C^{-n}\gamma^{(s-t)\ell_n}\left(
	\Bigg( \sum_{\jjj\in\Sigma_p}\phi^s(\jjj) \right)^{n-\eta(n)-(k_n-\eta(k_n))}\cdot \left( \sum_{\jjj\in\Sigma_p}\phi^s(\jjj)^2 \right)^{k_n-\eta(k_n)}\\
	&\hspace{28em}\cdot\underline{\gamma}^{s(\sum_{i=1}^{\eta(n)-1}\psi(m_i)+2\sum_{i=1}^{\eta(k_n)}\psi(m_i))}
	\Bigg)^{-1}\\
	&\leq \sum_{n=1}^{\infty}C^{-n}\gamma^{(s-t)\ell_n}\exp\Bigg(
	-p(P(s)-\delta)(n-\eta(n)-(k_n-\eta(k_n))+p(P_2(s)+\delta)(k_n-\eta(k_n))\\
	&\hspace{25em}-\log\underline{\gamma}s\left(\sum_{i=1}^{\eta(n)-1}\psi(m_i)+2\sum_{i=1}^{\eta(k_n)}\psi(m_i)\right)
	\Bigg)
      \end{align*}
      \begin{align*}
	&\leq \sum_{n=1}^{\infty}C^{-n}\gamma^{(s-t)\ell_n}\exp\Bigg(
	-p(P(s)-\delta)(n-\eta(n))+p(P(s)+P_2(s))(k_n-\eta(k_n))\\
	&\hspace{25em}-\log\underline{\gamma}s\left(\sum_{i=1}^{\eta(n)-1}\psi(m_i)+2\sum_{i=1}^{\eta(k_n)}\psi(m_i)\right)
	\Bigg).
	\intertext{ Using \eqref{eq:length'}, we get}
	&\leq \sum_{n=1}^{\infty}C^{-n}\gamma^{(s-t)\ell_n}\exp\Bigg(
	-(P(s)-\delta)(\ell_n-\sum_{i=1}^{\eta(n)}\psi(m_i)-K\eta(n)-p_0)\\
	&\hspace{05em}+(P(s)+P_2(s))(\ell_{k_n-1}+p(\eta(k_n-1)-\eta(k_n)))-\log\underline{\gamma}s\left(\sum_{i=1}^{\eta(n)-1}\psi(m_i)+2\sum_{i=1}^{\eta(k_n)}\psi(m_i)\right)
	\Bigg)
      \end{align*}
      \begin{align*}
	&\leq \sum_{n=1}^{\infty}C^{-n}\gamma^{(s-t)\ell_n}\exp\Bigg(
	-(P(s)-\delta)(m_{\eta(n)}-\sum_{i=1}^{\eta(n)-1}\psi(m_i)-K\eta(n)-p_0)\\
	&\hspace{15em}+(P(s)+P_2(s))(\psi(m_{\eta(n)})+p)-3\log\underline{\gamma}s\left(\sum_{i=1}^{\eta(n)-1}\psi(m_i)\right)
	\Bigg)
	\intertext{ Using the defining properties \eqref{eq:constr} of the sequence $m_n$, we have }
	&\leq c\sum_{n=1}^{\infty}C^{-n}\gamma^{(s-t)\ell_n}\exp\Bigg(
	-(P(s)-\delta)(1-2^{-n})m_{\eta(n)}+(P(s)+P_2(s))(\beta+\delta)m_{\eta(n)}-3\log\underline{\gamma}s2^{-n}m_{\eta(n)}
	\Bigg)
\end{align*}
Coupling this with the observation that $C^{-1}\gamma^{(s-t)p} <1$ and
$-(P(s)-\delta)(1-2^{-n})+(P(s)+P_2(s))(\beta+\delta)-2\log\underline{\gamma}s2^{-n}<0$ for
sufficiently large $n$, the left hand side is finite and the proof is complete.
\end{proof}

Now, the proof of Theorem~\ref{thm:mainTheorem2} is identical to the proof of Theorem~\ref{thm:mainTheorem} by replacing Lemma~\ref{thm:finiteEnergy} with Lemma~\ref{thm:finiteEnergy2}, so we omit it.

\section{Justification of Condition~\ref{thm:injection}}\label{sec:condition}
In this section, we give a sufficient condition under which Condition~\ref{thm:injection} holds. The proof is not only a modification of the proof but also an application of K\"aenm\"aki and Morris \cite[Proposition~4.1]{Kaenmaki18}. First, let us recall some definitions and notations from algebraic geometry, following Goldsheid and Guivarc'h \cite{GolGui} and K\"aenm\"aki and Morris \cite{Kaenmaki18}. 

Let us denote by $\wedge^k\R^d$ the \textit{$k$th exterior product of $\R^d$}. That is, let $\{e_1,\ldots,e_d\}$ be the standard orthonormal basis of $\R^d$ and define
$$
\wedge^k\R^d=\mathrm{span}\{e_{i_1}\wedge\cdots\wedge e_{i_k}:1\leq i_1<\cdots<i_k\leq d\}.
$$
for all $k=1,\ldots,d$ and let $\wedge^0\R^d=\R$ by convention. The \textit{wedge product $\wedge\colon\wedge^k\R^d\times\wedge^j\R^d\mapsto\wedge^{k+j}\R^d$} is an associative bilinear operator, which is anticommutative on the elements or $\R^d$, i.e. for $w\in\wedge^k\R^d$ and $v\in\wedge^{j}\R^d$
$$
w\wedge v=(-1)^{kj}v\wedge w.
$$
If $v\in\wedge^k\R^d$ can be expressed as a wedge product of $k$ vectors of $\R^d$ then $v$ is said
to be \textit{decomposable}. Let us define the \textit{Hodge star operator}
$*\colon\wedge^k\R^d\mapsto\wedge^{d-k}\R^d$ to be the bijective linear map satisfying 
$$
*(e_{i_1}\wedge\cdots\wedge e_{i_k})=\mathrm{sgn}(i_1,\ldots,i_d)e_{i_{k+1}}\wedge\cdots\wedge e_{i_d}
$$
for all $1\leq i_1<\cdots<i_k\leq d$, where $1\leq i_{k+1}<\cdots<i_d\leq d$ are such that $\{i_{k+1},\ldots,i_d\}=\{1,\ldots,d\}\setminus\{i_1,\ldots,i_k\}$ and $\mathrm{sgn}(i_1,\ldots,i_d)$ is the signature of the permutation $(i_1,\ldots,i_d)$ of $(1,\ldots,d)$. Let us define the \textit{inner product on $\wedge^k\R^d$} by
$$
\langle v,w\rangle_k=*(v\wedge(*w))
$$
for all $v,w\in\wedge^k\R^d$. Moreover, we define the norm of $v\in\wedge^k\R^d$ by $\|v\|_k=\sqrt{\langle v,v\rangle_k}$. It can be shown that if $v,w\in\wedge^k\R^d$ are decomposable elements then 
$$
\langle v,w\rangle_k=\det(\langle v_i,w_j\rangle),
$$
where $v=v_1\wedge\cdots\wedge v_k$ and $w=w_1\wedge\cdots\wedge w_k$. For $A\in \GL_d(\R)$, we can define an invertible linear map $A^{\wedge k}\colon\wedge^k\R^d\mapsto\wedge^k\R^d$ by setting
$$
A^{\wedge k}(e_{i_1}\wedge\cdots\wedge e_{i_k})=(Ae_{i_1})\wedge\cdots\wedge(Ae_{i_k})
$$
and extending by linearity. 

For every matrix $A\in \GL_d(\R)$, there exists a basis of orthonormal vectors $\{u_1,\ldots,u_d\}$ such that $\|Au_i\|=\alpha_i(A)$ and $\{\alpha_1(A)^{-1}Au_1,\ldots,\alpha_d(A)^{-1}Au_d\}$ is orthonormal. Hence, the operator norm of $A^{\wedge k}$ is
$$
\|A^{\wedge k}\|_k=\max\{\|A^{\wedge k}w\|_k:\|w\|_k=1\}=\|A^{\wedge k}(u_1\wedge\cdots\wedge u_k)\|_k=\alpha_1(A)\cdots\alpha_k(A).
$$
Thus, for every $0<s\leq d$, the singular value function can be written as
$$
\varphi^s(A)=\left(\|A^{\wedge \lfloor s\rfloor}\|_{\lfloor s\rfloor}\right)^{1+\lfloor s\rfloor -s}\left(\|A^{\wedge\lceil s\rceil}\|_{\lceil s\rceil}\right)^{s-\lfloor s\rfloor}.
$$

Let $\bA = \{A_1, A_2, \cdots, A_N\}$ be a tuple of $\GL_d(\R)^N$ matrices. We say that $\bA$ is \textit{$k$-irreducible} if there is no proper subspace $V$ of $\wedge^k\R^d$ such that $A^{\wedge k}V=V$ for every $A\in\bA$. Similarly, we say that $\bA$ is \textit{strongly $k$-irreducible} if there is no finite collection of proper subspaces $V_1,\ldots,V_n$ of $\wedge^k\R^d$ such that $\bigcup_{k=1}^n\bigcup_{A\in\bA}A^{\wedge k}V_k=\bigcup_{k=1}^nV_k$. Denote by $\mathcal{S}(\bA)$ the semi-group induced by $\bA$. The following lemma is due to K\"aenm\"aki and Morris \cite[Proposition~4.1]{Kaenmaki18}.

\begin{lemma}\label{lem:kaenmor}
	Let $\bA$ be a tuple of matrices of $\GL_d(\R)$ such that $\bA$ is $k$- and $k+1$-irreducible. If there exist nonzero vectors $v_k,w_k\in\wedge^k\R^d$ and $v_{k+1},w_{k+1}\in\wedge^{k+1}\R^d$ such that
	$$
	\langle v_k,A^{\wedge k}w_k\rangle_k\langle v_{k+1},A^{\wedge {k+1}}w_{k+1}\rangle_{k+1}=0
	$$
	for every $A\in\mathcal{S}(\bA)$ then $\bA$ is neither strongly $k$-irreducible nor strongly $(k+1)$-irreducible.
\end{lemma}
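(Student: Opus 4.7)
The plan is to adapt the strategy of K\"aenm\"aki--Morris: partition $\mathcal{S}(\bA)$ according to which factor of the hypothesis vanishes, and exploit this through the tensor-product representation $A \mapsto A^{\wedge k} \otimes A^{\wedge(k+1)}$ to produce the required finite invariant unions of proper subspaces.

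First I would set
\begin{align*}
S_k &:= \{A \in \mathcal{S}(\bA) : \langle v_k, A^{\wedge k}w_k\rangle_k = 0\},\\
S_{k+1} &:= \{A \in \mathcal{S}(\bA) : \langle v_{k+1}, A^{\wedge(k+1)}w_{k+1}\rangle_{k+1} = 0\},
\end{align*}
so that the hypothesis asserts $\mathcal{S}(\bA) = S_k \cup S_{k+1}$. Writing $H_k := \{u \in \wedge^k\R^d : \langle v_k, u\rangle_k = 0\}$ and $H_{k+1} := \{u \in \wedge^{k+1}\R^d : \langle v_{k+1}, u\rangle_{k+1} = 0\}$ for the associated proper hyperplanes, one has $A^{\wedge k}w_k \in H_k$ for every $A \in S_k$ and $A^{\wedge(k+1)}w_{k+1} \in H_{k+1}$ for every $A \in S_{k+1}$. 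The hypothesis reformulates as
$$\langle v_k \otimes v_{k+1}, \rho(A)(w_k \otimes w_{k+1})\rangle = 0\qquad\text{for all } A \in \mathcal{S}(\bA),$$
where $\rho(A) := A^{\wedge k} \otimes A^{\wedge(k+1)}$ acts on $\wedge^k\R^d \otimes \wedge^{k+1}\R^d$. Hence the span of the $\rho$-orbit of $w_k \otimes w_{k+1}$ is a proper $\rho$-invariant subspace lying inside the hyperplane $(v_k \otimes v_{k+1})^\perp$. Crucially, every orbit element is a decomposable tensor $(A^{\wedge k}w_k) \otimes (A^{\wedge(k+1)}w_{k+1})$, so the joint projective orbit $\{([A^{\wedge k}w_k],[A^{\wedge(k+1)}w_{k+1}])\}_{A \in \mathcal{S}(\bA)}$ sits inside the algebraic subvariety
$$Z := \bigl(\mathbb{P}(H_k) \times \mathbb{P}(\wedge^{k+1}\R^d)\bigr) \cup \bigl(\mathbb{P}(\wedge^k\R^d) \times \mathbb{P}(H_{k+1})\bigr)$$
of $\mathbb{P}(\wedge^k\R^d) \times \mathbb{P}(\wedge^{k+1}\R^d)$.

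The decisive step I would carry out is to take the Zariski closure $\overline O$ of the joint orbit in $Z$ and decompose it into its finitely many irreducible components. Since each irreducible subvariety contained in a union of two closed sets lies entirely in one of them, every component of $\overline O$ sits either in $\mathbb{P}(H_k) \times \mathbb{P}(\wedge^{k+1}\R^d)$ or in $\mathbb{P}(\wedge^k\R^d) \times \mathbb{P}(H_{k+1})$, and $\mathcal{S}(\bA)$ permutes these components under the joint action. Projecting onto the first factor, the components coming from $\mathbb{P}(H_k) \times \mathbb{P}(\wedge^{k+1}\R^d)$ deliver a nonempty finite family of invariant algebraic subvarieties of $\mathbb{P}(\wedge^k\R^d)$ whose union is properly contained in $\mathbb{P}(H_k)$; under the $k$-irreducibility assumption this family can be refined to a finite invariant union of proper linear subspaces of $\wedge^k\R^d$, witnessing the failure of strong $k$-irreducibility. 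The parallel projection onto the second factor, together with $(k+1)$-irreducibility, produces the corresponding finite invariant family in $\wedge^{k+1}\R^d$, witnessing the failure of strong $(k+1)$-irreducibility.

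The main obstacle will be the last refinement: passing from invariant algebraic subvarieties of the projective space to invariant finite unions of \emph{linear} proper subspaces. The $k$- and $(k+1)$-irreducibility hypotheses are indispensable here, since they rule out the degenerate scenario where a single irreducible component inflates to cover the entire ambient projective space; together with careful Noetherian bookkeeping on how the two halves of $Z$ interact under the joint semigroup action, they yield the required linear structure.
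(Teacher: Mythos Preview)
The paper does not prove this lemma: it is quoted as K\"aenm\"aki--Morris \cite[Proposition~4.1]{Kaenmaki18} and used as a black box. So there is no in-paper argument to compare against; your proposal has to stand on its own.

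Your setup is the right one --- the tensor representation $\rho(A)=A^{\wedge k}\otimes A^{\wedge(k+1)}$ and the observation that the joint projective orbit lies in $Z=(\mathbb{P}(H_k)\times\mathbb{P}(\wedge^{k+1}\R^d))\cup(\mathbb{P}(\wedge^k\R^d)\times\mathbb{P}(H_{k+1}))$ are exactly how the K\"aenm\"aki--Morris proof begins --- but two steps in your outline are not justified.

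First, you claim that projecting the ``first-type'' components (those contained in $\mathbb{P}(H_k)\times\mathbb{P}(\wedge^{k+1}\R^d)$) to the first factor yields a family that is \emph{invariant} under $A^{\wedge k}$. Even granting that each $A\in\mathcal{S}(\bA)$ permutes the irreducible components of $\overline O$ (which already needs care: $\mathcal{S}(\bA)$ is only a semigroup, so $A\cdot\overline O\subseteq\overline O$ need not be an equality), nothing prevents a first-type component from being mapped to a purely second-type component. The two pieces of $Z$ overlap, the ``type'' of a component is not preserved by the action, and if you instead project \emph{all} components you lose containment in $\mathbb{P}(H_k)$ and hence properness. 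Untangling this interaction is precisely the substance of the K\"aenm\"aki--Morris argument and is where the simultaneous $k$- and $(k{+}1)$-irreducibility hypotheses do real work; it does not fall out of generic Noetherian considerations.

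Second, the step you yourself flag --- upgrading a finite invariant union of algebraic subvarieties of $\mathbb{P}(\wedge^k\R^d)$ to a finite invariant union of proper \emph{linear} subspaces --- is not a bookkeeping matter. Irreducibility only rules out a single invariant proper subspace; it does not linearise an arbitrary invariant subvariety. One needs a concrete mechanism (e.g.\ replacing each variety by its linear span and verifying that the resulting finite collection of subspaces is still permuted and still proper), and making that work feeds straight back into the first gap. As written, the sketch identifies the correct objects but does not supply either of these two arguments.
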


For two vectorspaces $V$ and $W$, let us define the \textit{tensor product} $V\otimes W$ as follows
$$
V\otimes W=\mathrm{span}\{v\otimes w:v\in V,\ w\in W\},
$$
where for any $v_1,v_2\in V$, $w_1,w_2\in W$ and $\alpha\in\R$ 
\[
\begin{split}
&(v_1+v_2)\otimes w_1=v_1\otimes w_1+v_2\otimes w_1,\\
&v_1\otimes(w_1+w_2)=v_1\otimes w_1+v_1\otimes w_2,\\
&\alpha(v_1\otimes w_1)=(\alpha v_1)\otimes w_1=v_1\otimes(\alpha w_1).
\end{split}
\]
Let us consider the following tensor product of the exterior algebras
$$
\widehat{W}=\wedge^1\R^d\otimes\cdots\otimes\wedge^{d-1}\R^d.
$$
We define the inner product of $\widehat{W}$ for $u=u_1\otimes\cdots\otimes u_{d-1}, v=v_1\otimes\cdots\otimes v_{d-1}\in\widehat{W}$
$$
\langle u,v\rangle_\wedge=\prod_{i=1}^{d-1}\langle u_i,v_i\rangle_i,
$$
and extend it in a bilinear, symmetric way. We define a linear subspace $W$ of $\widehat{W}$, which is generated by the flags of $\R^d$ as follows:
$$
W=\mathrm{span}\{u_1\otimes(u_1\wedge u_2)\otimes\cdots\otimes(u_1\wedge\cdots\wedge u_{d-1}):\{u_1,\ldots,u_{d-1}\}\text{ linearly independent in }\R^d\}.
$$
We call $W$ the \textit{flag vector space}. Again, for an $A\in \GL_d(\R)$, we can define an invertible linear mar $\widehat{A}\colon\widehat{W}\mapsto\widehat{W}$ by setting for $u=u_1\otimes\cdots\otimes u_{d-1}$
$$
\widehat{A}(u_1\otimes\cdots\otimes u_{d-1})=(A^{\wedge1}u_1)\otimes\cdots\otimes(A^{\wedge(d-1)}u_{d-1})
$$
 and extending by linearity. It is easy to see that $\widehat{A}\colon W\mapsto W$ for $A\in GL_d(\R)$. Let us denote the restriction of the inner product $\langle\cdot,\cdot\rangle_{\wedge}$ and norm $\|\cdot\|_\wedge$ to $W$ by $\langle\cdot,\cdot\rangle_{W}$ and $\|\cdot\|_W$.

We say that $A\in \GL_d(\R)$ is \textit{fully proximal} if it has $d$ distinct eigenvalues in absolute value. Note that $A$ is fully proximal if and only if $A^{\wedge k}$ is $1$-proximal for every $k$ if and only if $\widehat{A}$ is $1$-proximal on $W$. We say that the tuple $\bA$ is fully proximal if there exists an $A\in\mathcal{S}(\bA)$ which is fully proximal.

We say that the tuple $\bA$ is \textit{fully strongly irreducible} or \textit{strongly irreducible over $W$} if there are no finite collection $V_1,\ldots,V_n$ of proper subspaces of $W$ such that
$$
\bigcup_{A\in\bA}\bigcup_{k=1}^n\widehat{A}V_k=\bigcup_{k=1}^nV_k.
$$

Before we prove Proposition~\ref{prop:multicond}, we need to recall two important tools. 

\begin{lemma}\label{lem:whoarestronglyirred}
	Suppose that $\bA$ is fully proximal and fully strongly irreducible then $\bA^\top=\{A_1^\top,\ldots,A_N^\top\}$ and $\bA^m=\{A_1\cdots A_m\}_{A_1,\ldots,A_m\in\bA}$ are also fully proximal and fully strongly irreducible for $m\geq1$. 
\end{lemma}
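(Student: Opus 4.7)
My plan is to handle the four claims (full proximality and full strong irreducibility, each for $\bA^\top$ and $\bA^m$) in sequence. The proximality claims are nearly immediate: if $B=A_{i_1}\cdots A_{i_k}$ is a fully proximal element of $\mathcal{S}(\bA)$, then $B^\top=A_{i_k}^\top\cdots A_{i_1}^\top$ has the same eigenvalues and lies in $\mathcal{S}(\bA^\top)$, while $B^m=(A_{i_1}\cdots A_{i_k})^m$ has eigenvalues $\lambda_i^m$ — still pairwise distinct in absolute value — and its $km$ factors regroup into $k$ consecutive blocks of length $m$, each an element of $\bA^m$, so $B^m\in\mathcal{S}(\bA^m)$.

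For full strong irreducibility of $\bA^m$ I would argue by contrapositive. Suppose proper subspaces $V_1,\ldots,V_n\subset W$ satisfy $\bigcup_{B\in\bA^m}\bigcup_k\widehat{B}(V_k)=\bigcup_k V_k=:X$. Set
\[
X':=\bigcup_{j=0}^{m-1}\bigcup_{\iii\in\Sigma_j}\widehat{A_\iii}(X),
\]
which is a finite union of proper subspaces of $W$ because every $\widehat{A_\iii}$ is an automorphism of $W$. To verify $\bigcup_{A\in\bA}\widehat{A}(X')=X'$ the $\subseteq$ direction splits by length: words of length $\leq m-1$ remain in $X'$, while words of length exactly $m$ lie in $\bA^m$ and so by hypothesis preserve $X$. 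For the $\supseteq$ direction one strips the first letter of $\iii$ to write $\widehat{A_\iii}(X)=\widehat{A_{i_1}}(\widehat{A_\jjj}(X))\subseteq\widehat{A_{i_1}}(X')$ whenever $|\iii|\geq1$, and recovers $X$ itself from the hypothesis $X=\bigcup_{B\in\bA^m}\widehat{B}(X)$ by peeling off the first letter of each such $B$. This produces an $\bA$-invariant finite union of proper subspaces of $W$, contradicting full strong irreducibility of $\bA$.

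For full strong irreducibility of $\bA^\top$ the additional ingredient is the adjoint identity $\widehat{A}^*=\widehat{A^\top}$ on $\widehat{W}$ (with respect to $\langle\cdot,\cdot\rangle_\wedge$), which follows factor-by-factor from $(A^{\wedge i})^*=(A^\top)^{\wedge i}$ on $\wedge^i\R^d$ and descends to $W$ since $W$ is invariant under both operators. Assume for contradiction that $\bigcup_A\bigcup_k\widehat{A^\top}(V_k)=\bigcup_k V_k$ for proper nonzero $V_k\subset W$, and let $I_{\max}$ denote the indices realising the maximal dimension. Since $\R$ is infinite, a subspace contained in a finite union of subspaces lies in one of them, so $\widehat{A^\top}(V_k)\subseteq V_{l(A,k)}$; comparing dimensions forces $l(A,k)\in I_{\max}$ with equality, defining a permutation $\sigma_A$ of $I_{\max}$. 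Passing to orthogonal complements in $W$, each $W_k:=V_k^{\perp_W}$ is a proper subspace, and the adjoint identity converts $\widehat{A^\top}(V_k)=V_{\sigma_A(k)}$ into $\widehat{A}(W_{\sigma_A(k)})=W_k$. Hence $\bigcup_{A\in\bA}\bigcup_{k\in I_{\max}}\widehat{A}(W_k)=\bigcup_{k\in I_{\max}}W_k$, contradicting full strong irreducibility of $\bA$.

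The main obstacle is the transpose case: without first restricting to the maximal-dimension stratum $I_{\max}$, the family $\{V_k\}$ is not permuted by $\widehat{A^\top}$ — a lower-dimensional $V_k$ may be sent strictly into a larger one — so a naive ``take perps'' step on the full collection fails. Extracting $I_{\max}$ repairs this and then the adjoint relation $\widehat{A}^*=\widehat{A^\top}$ does the rest essentially formally.
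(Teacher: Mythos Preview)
Your proof is correct and follows the same strategy as the paper: the enlarged family $\bigcup_{j=0}^{m-1}\bigcup_{\iii\in\Sigma_j}\widehat{A_\iii}V_k$ for $\bA^m$, and orthogonal complements via the adjoint identity $\widehat{A}^*=\widehat{A^\top}$ for $\bA^\top$. If anything, your transpose argument is more careful than the paper's: the paper asserts $(\widehat{A^{-1}}V_i)^\perp\subset\bigcup_j V_j^\perp$ directly from $\widehat{A^{-1}}V_i\subset\bigcup_jV_j$, but perps reverse inclusions, so this step is not justified without first extracting a subfamily on which the $V_k$ are genuinely permuted --- exactly your $I_{\max}$ restriction.
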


\begin{proof}
	Let $A\in \GL_d(\R)$ be a fully proximal matrix, and let $\lambda_1,\ldots,\lambda_d$ and $v_1,\ldots,v_d$ be the corresponding eigenvalues and eigenvectors. Then it is easy to see that any nonzero $w_i\in\mathrm{span}\{v_1,\ldots,v_{i-1},v_{i+1},\ldots,v_d\}^\perp$ is an eigenvector of $A^\top$ with eigenvalue $\lambda_i$. Indeed, $\langle A^{\top}w_i,v_j\rangle=\langle w_i,Av_j\rangle=\lambda_j\langle w_i,v_j\rangle=0$, we get that $A^\top w_i\in\mathrm{span}\{v_1,\ldots,v_{i-1},v_{i+1},\ldots,v_d\}^\perp$ and since $\dim\mathrm{span}\{v_1,\ldots,v_{i-1},v_{i+1},\ldots,v_d\}^\perp=1$ we have that $A^\top w_i=cw_i$ for some $c\in\R$. But since $c\langle w_i,v_i\rangle=\langle A^\top w_i,v_i\rangle=\langle w_i,Av_i\rangle=\lambda_i\langle w_i,v_i\rangle$ and $\langle w_i,v_i\rangle\neq0$, we get that $A^\top w_i=\lambda_iw_i$.
	
	Now, let us suppose that $\bA$ is not fully strongly irreducible and we show that then $\bA^\top$ is not fully strongly irreducible too. Let $V_1,\ldots,V_n$ be proper subspaces of $W$ such that
	$\bigcup_{A\in\bA}\bigcup_{i=1}^n\widehat{A}V_i=\bigcup_{i=1}^nV_i$. By the invertability, $\bigcup_{A\in\bA}\bigcup_{i=1}^n\widehat{A^{-1}}V_i=\bigcup_{i=1}^nV_i$. Clearly, $\widehat{A^{\top}}V^\perp=(\widehat{A^{-1}}V)^\perp$ for every proper subspace $V$ of $W$. So for any $i=1,\ldots,n$ and $A\in\bA$
	$$
	\widehat{A^\top}V_i^\top=(\widehat{A^{-1}}V_i)^\perp\subset\bigcup_{i=1}^n V_i^\perp,
	$$
	thus it follows that $\bA^\top$ is not fully strongly irreducible.
	
	Similarly, the fully proximality of $\bA$ implies clearly the fully proximality of $\bA^m$. Moreover, if $\bA^m$ is not fully strongly irreducible then there exists a finite family of proper subspaces $V_1,\ldots,V_n$ of $W$ such that $\bigcup_{A_1,\ldots,A_n\in\bA}\bigcup_{i=1}^n\widehat{A_1\cdots A_n}V_i=\bigcup_{i=1}^nV_i$. Thus, the tuple $\bA$ is not fully strongly irreducible for the family $\bigcup_{i=1}^n\bigcup_{k=0}^{m-1}\bigcup_{A_{1}\cdots A_{k}\in\bA}\{\widehat{A_{1}\cdots A_{k}}V_i\}$.
\end{proof}

Denote by $\mathcal{P}(W)$ the projective space of $W$ and $\SL(W)$ the space of linear maps of $W$ to $W$. Let 
$$
\mathcal{R}(\bA)=\left\{B\in \SL(W):\mathrm{rank}(B)=1\text{ and there exists $A_n\in\mathcal{S}(\bA)$ such that }\frac{\widehat{A_n}}{\|\widehat{A_n}\|_W}\to B\right\}
$$
and let
$$
\mathcal{L}(\bA)=\{BW:B\in\mathcal{R}(\bA)\}\subset\mathcal{P}(W).
$$
For a linear map $B\in \SL(W)$, denote by $\mathrm{Im}(B)$ the image space of $B$ and by
$\mathrm{Ker}(B)$ the kernel of $B$. Thus, $\mathcal{L}(\bA)=\{\mathrm{Im}(B):B\in\mathcal{R}(\bA)\}$.

Finally, let us denote the set of the fully proximal elements of the semigroup $\mathcal{S}(\bA)$ by
$\mathcal{S}_0(\bA)$. Then for every $A\in\mathcal{S}_0(\bA)$, the limit
$\lim_{n\to\infty}\frac{\widehat{A}^n}{\|\widehat{A}^n\|_W}$ exists and belongs to
$\mathcal{R}(\bA)$, and denote this limit by $\widehat{G}(A)$. Similarly, the limit
$G_k(A):=\lim_{n\to\infty}\frac{\left(A^{\wedge k}\right)^n}{\|\left(A^{\wedge k}\right)^n\|_k}$
exists and has rank 1. Moreover, $\mathrm{Im}(G_k(A))=\mathrm{span}\{v_1\wedge\ldots\wedge v_k\}$,
where $v_i$ is an eigenvector corresponding to the $i$-th largest eigenvalue in absolute value. Moreover, for a fully proximal matrix $A\in\mathcal{S}_0(\bA)$,
\begin{equation}\label{eq:relimages}
\mathrm{Im}(\widehat{G}(A))=\mathrm{Im}(G_1(A))\otimes\cdots\otimes\mathrm{Im}(G_{d-1}(A)).
\end{equation}

The following lemma is a corollary of Goldsheid and Guivarc'h \cite[Theorem~2.14]{GolGui}.

\begin{lemma}\label{lem:golgui}
	If $\bA$ is fully proximal and fully strongly irreducible then $\{\mathrm{Im}(\widehat{G}(A))\}_{A\in\mathcal{S}_0(\bA)}$ is dense in $\mathcal{L}(\bA)$ and $\bigcup_{A\in\bA}\widehat{A}\mathcal{L}(\bA)=\mathcal{L}(\bA)$.
\end{lemma}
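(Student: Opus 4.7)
The plan is to reduce the statement directly to Goldsheid--Guivarc'h \cite[Theorem~2.14]{GolGui}, which, for a sub-semigroup of $\GL$ acting strongly irreducibly and proximally on a vector space $V$, characterises the unique minimal closed invariant subset of $\mathcal{P}(V)$ as the closure of the attracting fixed directions of the proximal elements of the semigroup. I would apply this with $V = W$ and the semigroup $\widehat{\mathcal{S}(\bA)}:=\{\widehat{A}:A\in \mathcal{S}(\bA)\}$ acting on $W$.

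First I would verify the two hypotheses. Fully strong irreducibility of $\bA$ asserts precisely that no finite union of proper subspaces of $W$ is $\widehat{\bA}$-invariant, which is the standard strong irreducibility of $\widehat{\mathcal{S}(\bA)}$ on $W$. For proximality, pick $A_0\in\mathcal{S}(\bA)$ that is fully proximal; then each $A_0^{\wedge k}$ is $1$-proximal on $\wedge^k\R^d$, so $\widehat{A_0}$ is $1$-proximal on $\widehat{W}$, and by \eqref{eq:relimages} its rank-one limit $\widehat{G}(A_0)$ has image $\mathrm{Im}(G_1(A_0))\otimes\cdots\otimes\mathrm{Im}(G_{d-1}(A_0))$, a decomposable flag that lies in $W$. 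Hence the restriction $\widehat{A_0}|_W$ is still $1$-proximal, which is the proximality hypothesis on the semigroup.

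Next, Goldsheid--Guivarc'h identifies the minimal closed $\widehat{\mathcal{S}(\bA)}$-invariant subset of $\mathcal{P}(W)$ with the closure of $\{\mathrm{Im}(B):B\in\mathcal{R}(\bA)\}$, which is exactly $\mathcal{L}(\bA)$ by definition. The same theorem shows that the attracting directions $\mathrm{Im}(\widehat{G}(A))$ of the fully proximal elements $A\in\mathcal{S}_0(\bA)$ form a dense subset of this minimal set: this is proved by starting from the attractor of $A_0$ and pushing it around by the semigroup, then perturbing inside $\mathcal{S}_0(\bA)$ (strong irreducibility ensures that the orbit of any attractor is dense in the minimal set, and proximality guarantees that nearby products remain fully proximal with close attractors). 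This yields the first half of the lemma. The second half, $\bigcup_{A\in\bA}\widehat{A}\mathcal{L}(\bA)=\mathcal{L}(\bA)$, is then immediate from minimality: the set on the left is a non-empty, closed, $\widehat{\bA}$-forward-invariant subset of $\mathcal{L}(\bA)$, hence coincides with it.

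The main subtle point I would flag is the transfer from the ambient $\widehat{W}$ to the invariant subspace $W$: one must check that proximality and the limit directions $\mathrm{Im}(\widehat{G}(A))$ are genuinely intrinsic to $W$, so that Goldsheid--Guivarc'h applied to the action on $W$ produces the sets $\mathcal{R}(\bA)$ and $\mathcal{L}(\bA)$ as defined in the paper. This is exactly what \eqref{eq:relimages} and the $\widehat{A}$-invariance of $W$ for $A\in\GL_d(\R)$ provide; beyond this identification, the lemma is a direct citation.
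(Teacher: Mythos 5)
Your proposal is correct and follows exactly the route the paper takes: the paper states only that the lemma ``is a corollary of Goldsheid and Guivarc'h \cite[Theorem~2.14]{GolGui}'' and gives no further argument, so your careful verification of the hypotheses of that theorem on the flag space $W$ (strong irreducibility and $1$-proximality of the restricted action, with \eqref{eq:relimages} ensuring the attracting direction lies in $W$) and the derivation of both conclusions from uniqueness of the minimal closed invariant set is exactly the reasoning the authors are leaving to the reader.
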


\begin{lemma}\label{lem:strongirr}
	Let $\bA$ be a fully strongly irreducible matrix tuple. Then $\bA$ is strongly $k$-irreducible for every $k=1,\ldots,d-1$ and there exists no finite collection $V_1,\ldots,V_n$ of proper subspaces of $W$ such that $\mathcal{L}(\bA)\subseteq\bigcup_{i=1}^n\mathcal{P}(V_i)$.
\end{lemma}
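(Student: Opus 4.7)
The overall strategy is to prove each of the two conclusions by contradiction, in both cases exhibiting a finite family of proper subspaces of $W$ whose union is invariant under $\{\widehat{A}:A\in\bA\}$, which is precisely the data forbidden by full strong irreducibility.

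For strong $k$-irreducibility, suppose the conclusion fails, i.e.\ there exist proper subspaces $V_1,\ldots,V_n\subsetneq\wedge^k\R^d$ satisfying $\bigcup_{A\in\bA}\bigcup_i A^{\wedge k}V_i=\bigcup_iV_i$. I would lift them to $\widehat{W}$ by inserting $V_i$ into the $k$-th tensor factor, $\hat V_i:=(\wedge^1\R^d)\otimes\cdots\otimes V_i\otimes\cdots\otimes(\wedge^{d-1}\R^d)$, and then intersect with the flag space, $U_i:=W\cap \hat V_i$. Because each $A^{\wedge j}$ is a linear isomorphism of $\wedge^j\R^d$ and $\widehat{A}(W)=W$, the assumed invariance transfers first to $\{\hat V_i\}$ and then to $\{U_i\}$. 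The only delicate point is showing each $U_i$ is proper in $W$: if $W\subseteq\hat V_i$ then for every linearly independent $(u_1,\ldots,u_{d-1})$ the flag tensor $u_1\otimes(u_1\wedge u_2)\otimes\cdots\otimes(u_1\wedge\cdots\wedge u_{d-1})$ lies in $\hat V_i$, which forces $u_1\wedge\cdots\wedge u_k\in V_i$; these wedges sweep out all nonzero decomposable $k$-vectors, which already span $\wedge^k\R^d$, contradicting $V_i\subsetneq\wedge^k\R^d$.

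For the second statement, suppose $\mathcal{L}(\bA)\subseteq\bigcup_{i=1}^n\mathcal{P}(V_i)$ with each $V_i$ proper in $W$. The plan is to pass to the Zariski closure $X:=\overline{\mathcal{L}(\bA)}^{\mathrm{Zar}}$ inside $\mathcal{P}(W)$, which remains contained in the (Zariski-closed) union $\bigcup_i\mathcal{P}(V_i)$. Noetherianness of the real Zariski topology on $\mathcal{P}(W)$ (via the Hilbert basis theorem on homogeneous polynomials) gives finitely many irreducible components $X_1,\ldots,X_m$. By Lemma \ref{lem:golgui} we have $\widehat{A}(\mathcal{L}(\bA))\subseteq\mathcal{L}(\bA)$, hence $\widehat{A}(X)\subseteq X$; the ascending chain $X\subseteq\widehat{A}^{-1}(X)\subseteq\widehat{A}^{-2}(X)\subseteq\cdots$ must stabilise, forcing $\widehat{A}(X)=X$, so that the Zariski homeomorphism $\widehat{A}$ actually permutes the components $X_k$. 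Each irreducible $X_k$ sits inside the finite union $\bigcup_i\mathcal{P}(V_i)$ of closed sets and is therefore contained in a single $\mathcal{P}(V_{i(k)})$; its linear span $\tilde V_k\subseteq V_{i(k)}$ is then a proper subspace of $W$, and $\widehat{A}(\tilde V_k)=\tilde V_{\sigma_A(k)}$ for the permutation $\sigma_A$ induced on components. The family $\{\tilde V_k\}_{k=1}^m$ thus realises the forbidden configuration $\bigcup_A\bigcup_k\widehat{A}\tilde V_k=\bigcup_k\tilde V_k$, contradicting full strong irreducibility.

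I expect the main obstacle to be in the second part, namely the Noetherian bookkeeping needed to promote the mere inclusion $\widehat{A}(X)\subseteq X$ to an equality so that $\widehat{A}$ truly permutes (rather than merely maps into) the irreducible components of $X$; the properness argument in the first part is essentially combinatorial once one observes that decomposable $k$-vectors span $\wedge^k\R^d$.
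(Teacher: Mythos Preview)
Your first part is exactly the paper's argument: lift each $V_i\subsetneq\wedge^k\R^d$ to $\hat V_i$ by inserting it as the $k$-th tensor factor, intersect with $W$, and observe that the resulting family is still $\{\widehat{A}\}$-permuted; your justification that $U_i\subsetneq W$ (via the fact that decomposable $k$-vectors span $\wedge^k\R^d$) is actually more explicit than the paper's one-line ``it is easy to see''.

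For the second conclusion you take a genuinely different route. The paper proceeds elementarily: it reduces the covering family $\{V_i\}$ to one that is \emph{minimal} in the sense that no $\mathcal{L}(\bA)\cap\mathcal{P}(V_i)$ is itself coverable by finitely many proper subspaces of $V_i$, and then shows directly that for each $A\in\bA$ and each $j$ one must have $\widehat{A}V_j=V_i$ for some $i$ (since $\mathcal{L}(\bA)\cap\mathcal{P}(V_j)\subseteq\bigcup_i\mathcal{P}(\widehat{A}^{-1}V_i\cap V_j)$ forces $\widehat{A}^{-1}V_i\cap V_j=V_j$ by minimality). Your approach instead passes to the Zariski closure $X$ of $\mathcal{L}(\bA)$, decomposes it into irreducible components by Noetherianness, upgrades $\widehat{A}(X)\subseteq X$ to equality via the stabilising chain $X\subseteq\widehat{A}^{-1}(X)\subseteq\cdots$, and then takes linear spans of the permuted components. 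Both are correct; the paper's minimality descent is fully self-contained, while your algebraic-geometric argument makes the step from ``covered by a finite union of linear spaces'' to ``permutes a finite family of linear spaces'' conceptually transparent. One small correction: you cite Lemma~\ref{lem:golgui} for the inclusion $\widehat{A}(\mathcal{L}(\bA))\subseteq\mathcal{L}(\bA)$, but that lemma also assumes full proximality, which is not a hypothesis here. The inclusion you need follows directly from the definition of $\mathcal{R}(\bA)$ (if $\widehat{A_n}/\|\widehat{A_n}\|_W\to B$ of rank one, then $\widehat{AA_n}/\|\widehat{AA_n}\|_W\to\widehat{A}B/\|\widehat{A}B\|_W$), so you should appeal to that instead.
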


\begin{proof}
	Let us argue by contradiction. First, suppose that $\bA$ is not strongly $k$-irreducible for some $k\in\{1,\ldots,d-1\}$. Let $V_1,\ldots,V_n$ be a finite collection of proper subspaces of $\wedge^k\R^d$ such that $\bigcup_{\ell=1}^n\bigcup_{A\in\bA}A^{\wedge k}V_k=\bigcup_{\ell=1}^nV_\ell$. Let 
	$$
	\widehat{V}_\ell:=\left(\wedge^1\R^d\otimes\cdots\otimes\wedge^{k-1}\R^d\otimes V_\ell\otimes\wedge^{k+1}\R^d\otimes\cdots\otimes\wedge^{d-1}\R^d\right)\cap W.
	$$
	It is easy to see that $\widehat{V}_\ell$ is a proper subspace of $W$ for all $\ell=1,\ldots,d-1$ and $\bigcup_{\ell=1}^n\bigcup_{A\in\bA}\widehat{A}\widehat{V}_k=\bigcup_{\ell=1}^n\widehat{V}_\ell$, which is a contradiction.
	
	Now, suppose that there exists a finite collection $V_1,\ldots,V_n$ of proper subspaces of $W$ such that $\mathcal{L}(\bA)\subseteq\bigcup_{i=1}^n\mathcal{P}(V_i)$. Without loss of generality, we may assume that $V_1,\ldots,V_n$ is minimal in the sense that $\mathcal{L}(\bA)\cap\mathcal{P}(V_i)$ is not contained in a finite union of subspaces of $V_i$. Indeed, if $\mathcal{L}(\bA)\cap\mathcal{P}(V_i)\bigcup\bigcup_{i=1}^{n'}\mathcal{P}(V_i')$ for a finite collection of proper subspaces $V_1',\ldots,V_{n'}'$ of $V_i$, then one can replace $V_i$ with $V_1',\ldots,V_{n'}'$. Clearly, the procedure terminates in finitely many steps.
	
	We will show that for every $A\in\bA$ and every $j\in\{1,\ldots,n\}$ there exists $i\in\{1,\ldots,n\}$ such that $\widehat{A}V_j=V_i$. Clearly,
	\[
	\begin{split}
		\widehat{A}\left(\mathcal{L}(\bA)\cap\mathcal{P}(V_j)\right)&\subseteq\bigcup_{i=1}^n\mathcal{P}(V_i),\\
		\widehat{A}\left(\mathcal{L}(\bA)\cap\mathcal{P}(V_j)\right)&\subseteq\mathcal{P}(\widehat{A}V_j).
	\end{split}
\]
Since $\widehat{A}$ is invertible on $W$ we get
$$
\mathcal{L}(\bA)\cap\mathcal{P}(V_j)\subseteq\mathcal{P}(V_j)\cap \bigcup_{i=1}^n\mathcal{P}(\widehat{A}^{-1}V_i)=\bigcup_{i=1}^n\mathcal{P}(\widehat{A}^{-1}V_i\cap V_j).
$$
But by the minimality assumption of $V_1,\ldots,V_n$, the subspace $\widehat{A}^{-1}V_i\cap V_j$ must be equal to $V_j$ for an $i\in\{1,\ldots,n\}$. 

Thus, $\bigcup_{\ell=1}^n\bigcup_{A\in\bA}\widehat{A}\widehat{V}_k=\bigcup_{\ell=1}^n\widehat{V}_\ell$, which is again a contradiction.
\end{proof}

\begin{proof}[Proof of Proposition~\ref{prop:multicond}]
	Let us argue by contradiction. Namely, there exists $s>0$ such that for every $C>0$ and $K\in\N$ there exist $\iii_{C,K},\jjj_{C,K}\in\Sigma_*$ such that for all $\kkk\in\Sigma_K$
	\[
	\phi^s(A_{\iii_{C,K}\kkk\jjj_{C,K}}) <
	C\phi^s(A_{\iii_{C,K}})\phi^s(A_{\jjj_{C,K}})
	\]
	We may first assume that $s\notin\N$, the proof of the integer case is similar and even simpler. For short, let $\lfloor s\rfloor=k$ and $\lceil s\rceil=k+1$
	By the singular value decomposition of $A_{\iii_{C,K}}$ and $A_{\jjj_{C,K}}$, let $u_1^{(C,K)},\ldots,u_d^{(C,K)}$ and $v_1^{(C,K)},\ldots,v_d^{(C,K)}$ be the orthonormal bases such that
	$\left\|A_{\jjj_{C,K}}u_i^{(C,K)}\right\|=\alpha_i(A_{\jjj_{C,K}})$ and $\left\|(A_{\iii_{C,K}})^\top v_i^{(C,K)}\right\|=\alpha_i(A_{\iii_{C,K}})$. Hence,
	\[
	\begin{split}
	\left\|A_{\jjj_{C,K}}^{\wedge j}\right\|_j&=\left\|A_{\jjj_{C,K}}^{\wedge j}u_1^{(C,K)}\wedge\cdots\wedge u_j^{(C,K)}\right\|_j,\\
	\left\|A_{\iii_{C,K}}^{\wedge j}\right\|_j&=\left\|\left(A_{\iii_{C,K}}^{\wedge j}\right)^\top v_1^{(C,K)}\wedge\cdots\wedge v_j^{(C,K)}\right\|_j,
	\end{split}
	\]
	for all $j=1,\ldots,d-1$. For short, let $u_{C,K}^{\wedge j}=u_1^{(C,K)}\wedge\cdots\wedge u_j^{(C,K)}$ and $v_{C,K}^{\wedge j}=v_1^{(C,K)}\wedge\cdots\wedge v_j^{(C,K)}$. So for every $C>0$ and $K\in\N$ and for all $\kkk\in\Sigma_K$
	\begin{multline*}
	\left\langle \left(A_{\iii_{C,K}}^{\wedge k}\right)^\top v_{C,K}^{\wedge k},A_{\kkk}^{\wedge k}A_{\jjj_{C,K}}^{\wedge k}u_{C,K}^{\wedge k}\right\rangle_k^{k+1-s}\left\langle \left(A_{\iii_{C,K}}^{\wedge k+1}\right)^\top v_{C,K}^{\wedge k+1},A_{\kkk}^{\wedge k+1}A_{\jjj_{C,K}}^{\wedge k+1}u_{C,K}^{\wedge k+1}\right\rangle_{k+1}^{s-k}\\
	<C\left\|A_{\jjj_{C,K}}^{\wedge k}u_{C,K}^{\wedge k}\right\|_k^{k+1-s}\left\|A_{\jjj_{C,K}}^{\wedge k+1}u_{C,K}^{\wedge k+1}\right\|_{k+1}^{s-k}\left\|\left(A_{\iii_{C,K}}^{\wedge k}\right)^\top v_{C,K}^{\wedge k}\right\|_k^{k+1-s}\left\|\left(A_{\iii_{C,K}}^{\wedge k+1}\right)^\top v_{C,K}^{\wedge k+1}\right\|_{k+1}^{s-k}.
	\end{multline*}
	By compactness and possibly taking a subsequence, we may assume that $\frac{A_{\jjj_{C,K}}u_i^{(C,K)}}{\alpha_i(A_{\jjj_{C,K}})}\to u_i^{(K)}$ and $\frac{(A_{\iii_{C,K}})^\top v_i^{(C,K)}}{\alpha_i(A_{\iii_{C,K}})}\to v_i^{(K)}$ as $C\to0$ and $\{u_1^{(K)},\ldots, u_d^{(K)}\}$ and $\{v_1^{(K)},\ldots, v_d^{(K)}\}$ are both orthonormal bases of $\R^d$. Hence, for every $K\in\N$ there exist orthonormal bases $\{u_1^{(K)},\ldots, u_d^{(K)}\}$ and $\{v_1^{(K)},\ldots, v_d^{(K)}\}$ of $\R^d$ such that for every $\kkk\in\Sigma_K$
	\begin{equation*}
	\left\langle v_{K}^{\wedge k},A_{\kkk}^{\wedge k}u_{K}^{\wedge k}\right\rangle_k\left\langle v_{K}^{\wedge k+1},A_{\kkk}^{\wedge k+1}u_{K}^{\wedge k+1}\right\rangle_{k+1}=0, 
	\end{equation*}
	where $u_{K}^{\wedge j}=u_1^{(K)}\wedge\cdots\wedge u_j^{(K)}$ and $v_{K}^{\wedge j}=v_1^{(K)}\wedge\cdots\wedge v_j^{(K)}$ for all $j=1,\ldots,d-1$. Finally, again by compactness let $K_n$ be the subsequence for which $\{u_1^{(K_n!)},\ldots, u_d^{(K_n!)}\}\to\{u_1^{*},\ldots, u_d^{*}\}$ and $\{v_1^{(K_n!)},\ldots, v_d^{(K_n!)}\to\{v_1^{*},\ldots, v_d^{*}\}$ and as $n\to\infty$, and $u_{*}^{\wedge j}=u_1^{*}\wedge\cdots\wedge u_j^{*}$ and $v_{*}^{\wedge j}=v_1^{*}\wedge\cdots\wedge v_j^{*}$.

	\begin{claim}
		There exists a fully proximal matrix $A\in\mathcal{S}_0(\bA)$ such that $u_{*}^{\wedge k}\notin\mathrm{Ker}(G_k(A))$ and $u_{*}^{\wedge k+1}\notin\mathrm{Ker}(G_{k+1}(A))$.
	\end{claim}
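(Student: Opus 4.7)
I would argue by contradiction: suppose that for every $A\in\mathcal{S}_0(\bA)$, at least one of $u_*^{\wedge k}\in\mathrm{Ker}(G_k(A))$ or $u_*^{\wedge k+1}\in\mathrm{Ker}(G_{k+1}(A))$ holds. The strategy is to transport this information to the transpose semigroup $\mathcal{S}(\bA^\top)$, where the standard description of the rank-one limits $G_j$ turns kernels into orthogonal complements of images. The contradiction will then come from combining the density statement of Lemma~\ref{lem:golgui} with Lemma~\ref{lem:strongirr}, after noting via Lemma~\ref{lem:whoarestronglyirred} that $\bA^\top$ is also fully proximal and fully strongly irreducible.

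First I would record the kernel--image duality. For any fully proximal $A$ and any $j\in\{1,\ldots,d-1\}$, the rank-one limit $G_j(A)=\lim_n(A^{\wedge j})^n/\|(A^{\wedge j})^n\|_j$ has image equal to the top eigenline of $A^{\wedge j}$ and kernel equal to the hyperplane perpendicular to the top (right) eigenvector of $(A^{\wedge j})^\top=(A^\top)^{\wedge j}$. Writing $w_j(B)\in\wedge^j\R^d$ for a generator of the one-dimensional space $\mathrm{Im}(G_j(B))$, this reads $\mathrm{Ker}(G_j(A))=w_j(A^\top)^{\perp}$ with respect to $\langle\cdot,\cdot\rangle_j$. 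Since transposition is a bijection between $\mathcal{S}_0(\bA)$ and $\mathcal{S}_0(\bA^\top)$, the contradiction hypothesis rewrites as
\[\langle u_*^{\wedge k},w_k(B)\rangle_k\cdot\langle u_*^{\wedge k+1},w_{k+1}(B)\rangle_{k+1}=0\qquad\text{for every }B\in\mathcal{S}_0(\bA^\top).\]

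Second, I would assemble these vanishing pairings into a single orthogonality in $W$. Define the flag
\[u_{*,\mathrm{flag}}:=u_1^*\otimes(u_1^*\wedge u_2^*)\otimes\cdots\otimes(u_1^*\wedge\cdots\wedge u_{d-1}^*)\in W.\]
Orthonormality of $\{u_1^*,\ldots,u_d^*\}$ gives $\|u_{*,\mathrm{flag}}\|_W=1$, so $V:=u_{*,\mathrm{flag}}^{\perp}\cap W$ is a proper hyperplane in $W$. By \eqref{eq:relimages}, $\mathrm{Im}(\widehat{G}(B))=\mathrm{span}\{w_1(B)\otimes\cdots\otimes w_{d-1}(B)\}$, and the product definition of $\langle\cdot,\cdot\rangle_\wedge$ on pure tensors yields
\[\bigl\langle u_{*,\mathrm{flag}},\,w_1(B)\otimes\cdots\otimes w_{d-1}(B)\bigr\rangle_\wedge=\prod_{j=1}^{d-1}\langle u_*^{\wedge j},w_j(B)\rangle_j.\]
By the displayed identity from the first step, at least the factor at $j=k$ or at $j=k+1$ vanishes, so the whole product is $0$, i.e.\ $\mathrm{Im}(\widehat{G}(B))\subseteq V$ for every $B\in\mathcal{S}_0(\bA^\top)$.

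Finally, I would apply density and irreducibility to $\bA^\top$. Lemma~\ref{lem:whoarestronglyirred} guarantees that $\bA^\top$ is fully proximal and fully strongly irreducible, so Lemma~\ref{lem:golgui} gives that $\{\mathrm{Im}(\widehat{G}(B)):B\in\mathcal{S}_0(\bA^\top)\}$ is dense in $\mathcal{L}(\bA^\top)$. Since $V$ is closed in $W$, the inclusion from the previous paragraph passes to the closure, yielding $\mathcal{L}(\bA^\top)\subseteq\mathcal{P}(V)$. This directly contradicts Lemma~\ref{lem:strongirr} applied to $\bA^\top$, which forbids $\mathcal{L}(\bA^\top)$ from lying in any finite union of projective images of proper subspaces of $W$. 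The step I expect to be the main obstacle is the kernel--image duality $\mathrm{Ker}(G_j(A))=\mathrm{Im}(G_j(A^\top))^{\perp}$, whose verification requires carefully exploiting that fully proximal $A$ makes each $A^{\wedge j}$ proximal and identifying the left top eigenvector of $A^{\wedge j}$ with the right top eigenvector of $(A^\top)^{\wedge j}$; once this identification is firmly in place, the remainder of the argument reduces to a formal application of Lemmas~\ref{lem:whoarestronglyirred}--\ref{lem:strongirr}.
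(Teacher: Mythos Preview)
Your proof is correct and follows essentially the same route as the paper: argue by contradiction, use the duality $\mathrm{Ker}(G_j(A))^\perp=\mathrm{Im}(G_j(A^\top))$ to transfer the hypothesis to $\bA^\top$, then combine the density statement of Lemma~\ref{lem:golgui} with Lemma~\ref{lem:strongirr} (via Lemma~\ref{lem:whoarestronglyirred}) to derive a contradiction. The only cosmetic difference is that you trap $\mathcal{L}(\bA^\top)$ in the single hyperplane $u_{*,\mathrm{flag}}^{\perp}\cap W$ using the product structure of $\langle\cdot,\cdot\rangle_\wedge$, whereas the paper uses the union of the two proper subspaces obtained by inserting $\mathrm{span}\{u_*^{\wedge k}\}^\perp$ or $\mathrm{span}\{u_*^{\wedge k+1}\}^\perp$ in the $k$-th or $(k+1)$-th tensor slot; both packagings feed equally well into Lemma~\ref{lem:strongirr}.
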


\begin{proof}[Proof of the Claim]
	Again, let us argue by contradiction, that is for every $A\in\mathcal{S}_0(\bA)$, $u_{*}^{\wedge k}\in\mathrm{Ker}(G_k(A))$ or $u_{*}^{\wedge k+1}\in\mathrm{Ker}(G_{k+1}(A))$. It is easy to see that $A^\top$ is also fully proximal and  $\mathrm{Ker}(G_k(A))^\perp=\mathrm{Im}(G_k(A^\top))$. Hence,
	$$
	\mathrm{Im}(G_k(A^\top))\in\mathcal{P}(\mathrm{span}\{u_{*}^{\wedge k}\}^\perp)\text{ or }\mathrm{Im}(G_{k+1}(A^\top))\in\mathcal{P}(\mathrm{span}\{u_{*}^{\wedge k+1}\}^\perp),
	$$
	and so by \eqref{eq:relimages}
	\[
	\begin{split}
	\bigcup_{A^\top\in\mathcal{S}_0(\bA^\top)}\{\mathrm{Im}(\widehat{G}(A^\top))\}&\subseteq\mathcal{P}\left(\left(\wedge^1\R^d\otimes\cdots\otimes\wedge^{k-1}\R^d\otimes\mathrm{span}\{u_{*}^{\wedge k}\}^\perp\otimes\wedge^{k+1}\R^d\otimes\cdots\otimes\wedge^{d-1}\R^d\right)\cap W\right)\\
	&\qquad\bigcup\mathcal{P}\left(\left(\wedge^1\R^d\otimes\cdots\otimes\wedge^{k}\R^d\otimes\mathrm{span}\{u_{*}^{\wedge k+1}\}^\perp\otimes\wedge^{k+2}\R^d\otimes\cdots\otimes\wedge^{d-1}\R^d\right)\cap W\right).
	\end{split}
	\]
	By Lemma~\ref{lem:golgui}, $\bigcup_{A^\top\in\mathcal{S}_0(\bA^\top)}\{\mathrm{Im}(\widehat{G}(A^\top))\}$ is dense in $\mathcal{L}(\bA^\top)$ and so by taking the closure,
	\[
	\begin{split}
		\mathcal{L}(\bA^\top)&\subseteq\mathcal{P}\left(\left(\wedge^1\R^d\otimes\cdots\otimes\wedge^{k-1}\R^d\otimes\mathrm{span}\{u_{*}^{\wedge k}\}^\perp\otimes\wedge^{k+1}\R^d\otimes\cdots\otimes\wedge^{d-1}\R^d\right)\cap W\right)\\
		&\qquad\bigcup\mathcal{P}\left(\left(\wedge^1\R^d\otimes\cdots\otimes\wedge^{k}\R^d\otimes\mathrm{span}\{u_{*}^{\wedge k+1}\}^\perp\otimes\wedge^{k+2}\R^d\otimes\cdots\otimes\wedge^{d-1}\R^d\right)\cap W\right),
	\end{split}
	\]
	which is a contradiction by Lemma~\ref{lem:strongirr}.
\end{proof}

Let $A_{\iii}\in\mathcal{S}_0(\bA)$ as in the Claim and let $m=|\iii|$. Now, let $\jjj\in\bigcup_{k=0}^\infty\Sigma_{k m}$ be arbitrary. Then for every  sufficiently large $n$, $q_n:=\frac{K_n!-|\jjj|}{m}\in\N$, and so
	\begin{multline*}
	0=\left\langle v_{K_n!}^{\wedge k},A_{\jjj}^{\wedge k}\frac{\left(A_{\iii}^{\wedge k}\right)^{q_n}}{\|\left(A_{\iii}^{\wedge k}\right)^{q_n}\|_k}u_{K_n!}^{\wedge k}\right\rangle_k\left\langle v_{K_n!}^{\wedge k+1},A_{\jjj}^{\wedge k+1}\frac{\left(A_{\iii}^{\wedge k+1}\right)^{q_n}}{\|\left(A_{\iii}^{\wedge k+1}\right)^{q_n}\|_{k+1}}u_{K_n!}^{\wedge k+1}\right\rangle_{k+1}\\
	\to\left\langle v_{*}^{\wedge k},A_{\jjj}^{\wedge k}G_k(A_{\iii}^{\wedge k})u_{*}^{\wedge k}\right\rangle_k\left\langle v_{*}^{\wedge k+1},A_{\jjj}^{\wedge k+1}G_{k+1}(A_{\iii}^{\wedge k+1})u_{*}^{\wedge k+1}\right\rangle_{k+1}
\end{multline*}
 as $n\to\infty$. In particular, for every $\jjj\in\bigcup_{k=0}^\infty\Sigma_{k m}$
 \[
 \left\langle v_{*}^{\wedge k},A_{\jjj}^{\wedge k}G_k(A_{\iii}^{\wedge k})u_{*}^{\wedge k}\right\rangle_k\left\langle v_{*}^{\wedge k+1},A_{\jjj}^{\wedge k+1}G_{k+1}(A_{\iii}^{\wedge k+1})u_{*}^{\wedge k+1}\right\rangle_{k+1}=0
 \] Since $G_k(A_{\iii}^{\wedge k})u_{*}^{\wedge k}$ and $G_{k+1}(A_{\iii}^{\wedge k+1})u_{*}^{\wedge k+1}$ are nonzero vectors, by Lemma~\ref{lem:kaenmor} we have that $\bA^m$ is neither strongly $k$-irreducible nor strongly $(k+1)$-irreducible, which together with Lemma~\ref{lem:whoarestronglyirred} and Lemma~\ref{lem:strongirr} is a contradiction.
\end{proof}

\begin{remark}
To show the claim of Proposition~\ref{prop:multicond} for a particular $s\notin\N\cap(0,d)$ (or $s\in\N\cap(0,d)$), it is enough to assume that the induced action of the tuple $\bA$ on $\wedge^{\lfloor s\rfloor}\R^d\otimes\wedge^{\lceil s\rceil}\R^d$ (on $\wedge^s\R^d$) is proximal and strongly irreducible. However, in our situation to prove Theorem~\ref{thm:mainTheorem} we need a condition which holds for every $s>0$ since the value $s_0$, for which $P(s_0)=\alpha(s_0)$, is unknown.
\end{remark}

\end{document}